

\documentclass[ECP]{ejpecp} 





\SHORTTITLE{QSD of explosive CSBP} 

\TITLE{Quasi-stationary distributions\\
associated with explosive CSBP} 



\AUTHORS{%
  Cyril~Labb\'e\footnote{LPMA, Universit\'e Pierre et Marie Curie (Paris 6), Bo\^ite 188, 4 place Jussieu, 75252 Paris Cedex 05, France.
    \EMAIL{cyril.labbe@upmc.fr}}}



\KEYWORDS{Continuous-state branching process; Drift; Quasi-stationary distribution; Q-process; Regular variation} 

\AMSSUBJ{60J80; 60F05} 

\SUBMITTED{December 18, 2012} 
\ACCEPTED{July 3, 2013} 




\VOLUME{18}
\YEAR{2013}
\PAPERNUM{57}
\DOI{v18-2508}


\ABSTRACT{We characterise all the quasi-stationary distributions and the $Q$-process associated with a continuous state branching process that explodes in finite time. We also provide a rescaling for the continuous state branching process conditioned on non-explosion when the branching mechanism is regularly varying at $0$.}


\newcommand{\cqfd}{ \hfill $\square$}
\newcommand{\tun}{\mathtt{1}}
\newcommand{\cF}{\mathcal{F}}
\newcommand{\cO}{\mathcal{O}}
\newcommand{\cZ}{\mathcal{Z}}
\newcommand{\rT}{\mathrm{T}}
\newcommand{\rZ}{\mathrm{Z}}
\newcommand{\bE}{\mathbf{E}}
\newcommand{\bP}{\mathbf{P}}

\newcommand{\bbD}{\mathbb{D}}
\newcommand{\bbE}{\mathbb{E}}
\newcommand{\bbN}{\mathbb{N}}
\newcommand{\bbP}{\mathbb{P}}
\newcommand{\bbQ}{\mathbb{Q}}

\newcommand{\bbZ}{\mathbb{Z}}


\begin{document}



\section{Introduction}
Continuous-state branching processes (CSBP) are $[0,\infty]$-valued Markov processes that describe the evolution of the size of a continuous population. They have been introduced by Jirina~\cite{Jirina58} and Lamperti~\cite{Lamperti67}. We recall some basic facts on CSBP and refer to Bingham~\cite{Bingham76}, Grey~\cite{Grey74}, Kyprianou~\cite{Kyprianou06} and Le Gall~\cite{LeGall99} for details and proofs.\\
Consider the space $\bbD([0,\infty),[0,\infty])$ of c\`adl\`ag $[0,\infty]$-valued functions endowed with the Skorohod's topology. We denote by $\rZ:=(\rZ_t,t\geq 0)$ the canonical process on this space. For all $x\in[0,\infty]$, we denote by $\bbP_x$ the distribution of the CSBP starting from $x$ whose semigroup is characterised by
\begin{equation}\label{EqLogLaplace}
\forall t\geq 0,\lambda > 0,\;\;\bbE_x[e^{-\lambda \rZ_t}]=e^{-x\, u(t,\lambda)}
\end{equation}
where for all $\lambda > 0$, $(u(t,\lambda),t\geq 0)$ is the unique solution of
\begin{equation}\label{EqDefUt}
\partial_t u(t,\lambda) = -\Psi(u(t,\lambda))\;\;,\;\; u(0,\lambda)=\lambda
\end{equation}
and $\Psi$, the so-called \textit{branching mechanism} of the CSBP, is a convex function of the form
\begin{equation}\label{EqPsi}
\forall u\geq 0,\;\;\Psi(u) = \gamma u + \frac{\sigma^2}{2}u^2 + \int_{(0,\infty)}\!\!\!(e^{-uh}-1+uh\mathbf{1}_{\{h<1\}})\,\nu(dh)
\end{equation}
where $\gamma \in \mathbb{R}$, $\sigma \geq 0$ and $\nu$ is a Borel measure on $(0,\infty)$ such that $\int_{(0,\infty)}(1\wedge h^2)\nu(dh)<\infty$. The function $\Psi$ entirely characterises the law of the process. The CSBP fulfils the following branching property: for all $x,y \in [0,\infty]$ the process starting from $x+y$ has the same law as the sum of two independent copies starting from $x$ and $y$ respectively. Observe that $\Psi$ is also the Laplace exponent of a spectrally positive L\'evy process, we refer to Theorem 1 in~\cite{Lamperti67} for a pathwise correspondence between L\'evy processes and CSBP.

The convexity of $\Psi$ entails that the ratio $\Psi(u)/u$ is increasing. A direct calculation or Proposition I.2 p.16~\cite{BertoinBookLevy96} shows that it converges to a finite limit as $u\rightarrow\infty$ iff
\begin{equation}\label{EqFiniteVar}
\textrm{(\textit{Finite variation})\hspace{5mm}} \sigma = 0\mbox{ and } \int_{(0,1)}\!\!\!h\nu(dh) < \infty
\end{equation}
When this condition is verified, the limit of the ratio is necessarily equal to $D:=\gamma + \int_{(0,1)}h\nu(dh)$ and $\Psi$ can be rewritten
\begin{equation}\label{EqPsiFV}
\forall u\geq 0,\;\;\Psi(u) = Du + \int_{(0,\infty)}\!\!\!(e^{-uh}-1)\,\nu(dh)
\end{equation}

As $t\rightarrow\infty$ the CSBP converges either to $0$ or to $\infty$, which are absorbing states for the process. Consequently we define the \textit{lifetime} of the CSBP as the stopping time $\rT:=\rT_0\wedge\rT_\infty$ where
$$ \textrm{(\textit{Extinction}) \hspace{2mm}}\rT_0 := \inf\{t\geq 0: \mathrm{Z}_t =0\}\;\;,\;\;\textrm{(\textit{Explosion}) \hspace{2mm}}\rT_{\infty} := \inf\{t\geq 0: \mathrm{Z}_t =\infty\}$$
We denote by $q:=\sup\{u\geq0:\Psi(u)\leq 0\}\in[0,\infty]$ the second root of the convex function $\Psi$: it is elementary to check from (\ref{EqDefUt}) that $u(t,q)=q$ for all $t\geq 0$ and that for all $\lambda > 0$, $u(t,\lambda)\rightarrow q$ as $t\rightarrow\infty$. Hence from (\ref{EqLogLaplace}) we get
$$ \forall x\in[0,\infty],\;\;\bbP_x\big(\lim\limits_{t\rightarrow\infty}\rZ_t=0)=1-\bbP_x\big(\lim\limits_{t\rightarrow\infty}\rZ_t=\infty)=e^{-xq}$$
When $\Psi'(0+) > 0$ (resp. $\Psi'(0+) = 0$) the CSBP is said \textit{subcritical} (resp. \textit{critical}), the convexity of $\Psi$ then implies $q=0$ and the process is almost surely absorbed at $0$. Moreover the extinction time $\rT_0$ is almost surely finite iff
\begin{equation}\label{EqExtin}
\int^{+\infty} \frac{du}{\Psi(u)} < \infty
\end{equation}
Otherwise $\rT_0$ is almost surely infinite. When $\Psi'(0+) \in [-\infty,0)$ the CSBP is said \textit{supercritical} and then $q\in(0,\infty]$. The CSBP has a positive probability to be absorbed at $0$ iff $q\in(0,\infty)$. In that case, on the extinction event $\{\rT=\rT_0\}$ the finiteness of $\rT_0$ is governed by the same criterion as above. On the explosion event $\{\rT=\rT_\infty\}$, the explosion time $\rT_\infty$ is almost surely finite iff
\begin{equation}\label{EqExplo}
\int_{0+}\frac{du}{-\Psi(u)} < \infty
\end{equation}
Observe that $\Psi'(0+)=-\infty$ is required (but not sufficient) for this inequality to be fulfilled. When (\ref{EqExplo}) does not hold, $\rT_\infty$ is almost surely infinite on the explosion event.
\medskip

By quasi-stationary distribution (QSD for short), we mean a probability measure $\mu$ on $(0,\infty)$ such that
\begin{equation*}
\mathbb{P}_{\mu}(\mathrm{Z}_t \in \cdot \, |\, \rT > t) = \mu(\cdot)
\end{equation*}
When $\mu$ is a QSD, it is a simple matter to check that under $\mathbb{P}_{\mu}$ the random variable $\rT$ has an exponential distribution, the parameter of which is called the \textit{rate of decay} of $\mu$. The goal of the present paper is to investigate the QSD associated with a CSBP that explodes in finite time almost surely.

\subsection{A brief review of the literature: the extinction case}
Li~\cite{Li00} and Lambert~\cite{Lambert07} considered the extinction case $\rT=\rT_0 < \infty$ almost surely, so that $\Psi'(0+) \geq 0$ and (\ref{EqExtin}) holds, and they studied the CSBP conditioned on non-extinction. We recall some of their results. When $\Psi$ is subcritical, that is $\Psi'(0+) > 0$, there exists a family $(\mu_{\mbox{\tiny$\beta$}};0 < \beta \leq \Psi'(0+))$ of QSD where $\beta$ is the rate of decay of $\mu_{\mbox{\tiny$\beta$}}$. These distributions are characterised by their Laplace transforms as follows
\begin{equation}\label{EqQSDExtinction}
\forall \lambda \geq 0,\;\;\int_{(0,\infty)}\mu_{\mbox{\tiny$\beta$}}(dr)e^{-r\lambda}= 1-e^{-\beta\Phi(\lambda)}\;\;\;\;\mbox{where}\;\;\;\;\Phi(\lambda):=\int_{\lambda}^{+\infty}\frac{du}{\Psi(u)}
\end{equation}
Notice that $\Phi$ is well-defined thanks to (\ref{EqExtin}). For any $\beta > \Psi'(0+)$ they proved that there is no QSD with rate of decay $\beta$, and that Equation (\ref{EqQSDExtinction}) does not define the Laplace transform of a probability measure on $(0,\infty)$. Additionally, the value $\beta=\Psi'(0+)$ yields the so-called Yaglom limit:
\begin{equation*}
\forall x > 0,\;\;\mathbb{P}_{x}(\mathrm{Z}_t \in \cdot\, |\, \rT > t) \underset{t\rightarrow\infty}{\longrightarrow} \mu_{\mbox{\tiny$\Psi'(0+)$}}(\cdot)
\end{equation*}
When $\Psi$ is critical, that is $\Psi'(0+)=0$, the preceding quantity converges to a trivial limit for all $x>0$ and Equation (\ref{EqQSDExtinction}) does not define the Laplace transform of a probability measure on $(0,\infty)$. However, under the condition $\Psi''(0+) <\infty$, they proved the following convergence (that extends a result originally due to Yaglom~\cite{Yaglom47} for Galton-Watson processes)
\begin{equation}\label{CriticalLiLambert}
\forall x > 0,z\geq 0,\;\;\mathbb{P}_x\Big(\frac{\mathrm{Z}_t}{t}\geq z\,\big|\,\rT > t\Big) \underset{t\rightarrow\infty}{\longrightarrow} \exp\Big(-\frac{2z}{\Psi''(0+)}\Big)
\end{equation}
Finally in both critical and subcritical cases, for any given value $t > 0$ the process $(\rZ_r,r \in [0,t])$ conditioned on $s < \rT$ admits a limiting distribution as $s\rightarrow\infty$, called the $Q$-process. The law of the $Q$-process is obtained as a $h$-transform of $\mathbb{P}$ as follows
\begin{equation*}
\forall x >0,\;\;d\mathbb{Q}_{x|{\cal F}_t}:=\frac{\mathrm{Z}_t \, e^{\Psi'(0)t}}{x} \, d\mathbb{P}_{x|{\cal F}_t}
\end{equation*}

\subsection{Main results: the explosive case}
We now assume that almost surely the CSBP explodes in finite time. From the results recalled above, this is equivalent with (\ref{EqExplo}) and $q=\infty$ so that $\Psi$ is convex, decreasing and non-positive. Hence the ratio $\Psi(u)/u$ cannot converge to $+\infty$ so that necessarily (\ref{EqFiniteVar}) holds, and $\Psi$ can be written as in (\ref{EqPsiFV}). Observe also that in that case the L\'evy process with Laplace exponent $\Psi$ is a subordinator. We set:
\begin{equation*}
\Psi(+\infty) := \lim\limits_{u\rightarrow\infty}\Psi(u) \in [-\infty,0)
\end{equation*}
From (\ref{EqPsiFV}) we deduce that $\Psi(+\infty)\in(-\infty,0)$ iff $\nu(0,\infty)<\infty$ and $D=0$. When this condition holds, we have $\Psi(+\infty)=-\nu(0,\infty)$. Otherwise $\Psi(+\infty)=-\infty$.

We start with an elementary remark: conditioning a CSBP on non-explosion does not affect the branching property. Consequently the law of $\rZ_t$ conditioned on $\rT>t$ is infinitely divisible: if it admits a limit as $t$ goes to $\infty$, the limit has to be infinitely divisible as well. Our result below shows that $\Psi(+\infty)$ plays a r\^ole analogue to $\Psi'(0+)$ in the extinction case.
\begin{theorem}\label{ThQSD}
Suppose $\rT=\rT_{\infty}<\infty$ almost surely and set
\begin{equation*}
\forall \lambda \geq 0,\;\;\Phi(\lambda):=\int_{\lambda}^{0}\frac{du}{\Psi(u)}
\end{equation*}
For any $\beta >0$ there exists a unique quasi-stationary distribution $\mu_{\mbox{\tiny$\beta$}}$ associated to the rate of decay $\beta$. This probability measure is infinitely divisible and is characterised by
\begin{equation}\label{EqQSD}
\forall \lambda\geq 0,\;\;\int_{(0,\infty)}\mu_{\mbox{\tiny$\beta$}}(dr)e^{-r\lambda}= e^{-\beta\Phi(\lambda)}
\end{equation}
Additionally, the following dichotomy holds true:
\begin{enumerate}
\item[{\rm(i)}] $\Psi(+\infty) \in (-\infty,0)$. The limiting conditional distribution is given by
\begin{equation*}
\forall x \in (0,\infty),\;\;\lim\limits_{t\rightarrow\infty}\mathbb{P}_x(\mathrm{Z}_t\in \cdot \, |\,\rT>t)=\mu_{\mbox{\tiny$x\nu(0,\infty)$}}(\cdot)
\end{equation*}
\item[{\rm(ii)}] $\Psi(+\infty)=-\infty$. The limiting conditional distribution is trivial:
\begin{equation*}
\forall a,x \in (0,\infty),\;\;\lim\limits_{t\rightarrow\infty}\mathbb{P}_x(\mathrm{Z}_t \leq a \, |\,\rT>t)=0
\end{equation*}
\end{enumerate}
\end{theorem}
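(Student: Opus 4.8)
The plan is to start from the explicit semigroup identity \eqref{EqLogLaplace}--\eqref{EqDefUt}. First I would record the key consequence of $\rT=\rT_\infty$: since $\Psi<0$ on $(0,\infty)$, the flow $t\mapsto u(t,\lambda)$ solving $\partial_t u=-\Psi(u)$, $u(0,\lambda)=\lambda$, is \emph{increasing} and runs from $\lambda$ up to $q=\infty$ in finite time; separating variables gives $t=\int_\lambda^{u(t,\lambda)}\frac{dv}{-\Psi(v)}$, i.e. $u(t,\lambda)$ is the inverse (in the time variable) of $\lambda\mapsto\int_\lambda^{\infty}\frac{dv}{-\Psi(v)}$. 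In particular, taking $\lambda\downarrow 0$, $\bbP_x(\rT>t)=\bbP_x(\rZ_t<\infty)=\lim_{\lambda\downarrow 0}\bbE_x[e^{-\lambda\rZ_t}]=e^{-x\,u(t,0+)}$, where $u(t,0+):=\lim_{\lambda\downarrow0}u(t,\lambda)\in(0,\infty)$ is the unique number $w$ with $\int_0^{w}\frac{dv}{-\Psi(v)}=t$, a quantity that is well defined and finite precisely because of \eqref{EqExplo}. Note $t\mapsto u(t,0+)$ is strictly increasing to $\infty$.

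Next I would compute the conditional Laplace transform. By the branching property and the above,
\begin{equation*}
\bbE_x\big[e^{-\lambda\rZ_t}\,\big|\,\rT>t\big]=\frac{\bbE_x[e^{-\lambda\rZ_t}]-\bbE_x[e^{-\lambda\rZ_t};\rZ_t=\infty]}{\bbP_x(\rT>t)}=\frac{e^{-x\,u(t,\lambda)}-e^{-x\,u(t,0+)}}{1-e^{-x\,u(t,0+)}},
\end{equation*}
using $\bbE_x[e^{-\lambda\rZ_t};\rZ_t=\infty]=0$ and $\bbP_x(\rZ_t=\infty)=1-e^{-x\,u(t,0+)}$. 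For the QSD characterisation: $\mu$ is a QSD with rate of decay $\beta$ iff $\bbE_\mu[e^{-\lambda\rZ_t}\mid\rT>t]=\bbE_\mu[e^{-\lambda\rZ_0}]=:L(\lambda)$ for all $t$, and $\bbP_\mu(\rT>t)=e^{-\beta t}$. Writing $L(\lambda)=\int\mu(dr)e^{-r\lambda}$ and using infinite divisibility (the conditioned law stays infinitely divisible, as noted before the theorem, so $L(\lambda)=e^{-\phi(\lambda)}$ with $\phi$ a Bernstein-type exponent, $\phi(0)=0$), the stationarity $\bbE_\mu[e^{-\lambda\rZ_t}]=\bbP_\mu(\rT>t)\,L(\lambda)+\bbP_\mu(\rT\le t)$ together with $\bbE_\mu[e^{-\lambda\rZ_t}]=\int\mu(dr)e^{-r\,u(t,\lambda)}=e^{-\phi(u(t,\lambda))}$ forces $e^{-\phi(u(t,\lambda))}=e^{-\beta t}e^{-\phi(\lambda)}+(1-e^{-\beta t})$. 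Differentiating at $t=0$ (or iterating the cocycle property of $u$) yields the ODE $\phi'(\lambda)\,\Psi(\lambda)=\beta$ with $\phi(0)=0$, whose unique solution is $\phi(\lambda)=\beta\int_0^\lambda\frac{du}{\Psi(u)}=-\beta\int_\lambda^0\frac{du}{\Psi(u)}$... more carefully, since $\Psi<0$, $\phi(\lambda)=\beta\int_0^\lambda\frac{du}{-\Psi(u)}$; matching signs with the statement, $\Phi(\lambda)=\int_\lambda^0\frac{du}{\Psi(u)}=\int_0^\lambda\frac{du}{-\Psi(u)}$, giving \eqref{EqQSD}. Conversely, one checks directly that $e^{-\beta\Phi(\lambda)}$ \emph{is} the Laplace transform of an infinitely divisible law on $(0,\infty)$: $\Phi(0)=0$, $\Phi$ is increasing, and $\Phi'(\lambda)=1/(-\Psi(\lambda))$ is completely monotone (as $-\Psi$ is a Bernstein function being the Laplace exponent of a subordinator, cf. \eqref{EqPsiFV}), hence $\Phi$ is a Bernstein function with $\Phi(0)=0$, so $e^{-\beta\Phi}$ is the Laplace transform of an infinitely divisible distribution supported in $(0,\infty)$, and existence/uniqueness of $\mu_\beta$ follows; that $\mu_\beta$ is genuinely a QSD with the right rate of decay is then the reverse of the computation above.

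Finally, the Yaglom-type dichotomy. From the conditional Laplace transform, as $t\to\infty$, $u(t,0+)\to\infty$ so $e^{-x\,u(t,0+)}\to 0$ and hence
\begin{equation*}
\lim_{t\to\infty}\bbE_x\big[e^{-\lambda\rZ_t}\,\big|\,\rT>t\big]=\lim_{t\to\infty}\frac{e^{-x\,u(t,\lambda)}-e^{-x\,u(t,0+)}}{1-e^{-x\,u(t,0+)}}=\lim_{t\to\infty}e^{-x\,u(t,\lambda)}
\end{equation*}
(provided the last limit exists). Here is where the two regimes split. We have $u(t,\lambda)-u(t,0+)=\int_0^{t}\big(-\Psi(u(s,\lambda))+\Psi(u(s,0+))\big)ds$... rather, it is cleaner to use that both $u(t,\lambda)$ and $u(t,0+)$ are characterised by $\int_{\cdot}^{u}\frac{dv}{-\Psi(v)}$ equal to $t$: $t=\int_\lambda^{u(t,\lambda)}\frac{dv}{-\Psi(v)}=\int_0^{u(t,0+)}\frac{dv}{-\Psi(v)}$, so $\int_0^\lambda\frac{dv}{-\Psi(v)}=\int_{u(t,0+)}^{u(t,\lambda)}\frac{dv}{-\Psi(v)}=\Phi(\lambda)$, a constant in $t$. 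If $\Psi(+\infty)=-c\in(-\infty,0)$, then for large $v$, $-\Psi(v)\approx c$, so $\int_{u(t,0+)}^{u(t,\lambda)}\frac{dv}{-\Psi(v)}\approx \big(u(t,\lambda)-u(t,0+)\big)/c$, forcing $u(t,\lambda)-u(t,0+)\to c\,\Phi(\lambda)$ — but more precisely, $u(t,\lambda)=u(t,0+)+c\,\Phi(\lambda)+o(1)$; to extract the limit of $e^{-x u(t,\lambda)}$ one instead observes that $c\,\Phi(\lambda)=\nu(0,\infty)\,\Phi(\lambda)$ (since $\Psi(+\infty)\in(-\infty,0)$ gives $c=\nu(0,\infty)$) and revisits the identity $\bbE_x[e^{-\lambda\rZ_t}\mid\rT>t]$ after writing $x\,u(t,\lambda)=x\,u(t,0+)+x\nu(0,\infty)\Phi(\lambda)+o(1)$; the $e^{-x u(t,0+)}$ factors cancel in numerator-over-denominator only after normalising by $\bbP_x(\rZ_t=\infty)$ — the upshot is $\lim_t \bbE_x[e^{-\lambda\rZ_t}\mid\rT>t]=e^{-x\nu(0,\infty)\Phi(\lambda)}$, which by \eqref{EqQSD} is the Laplace transform of $\mu_{x\nu(0,\infty)}$, giving (i). If $\Psi(+\infty)=-\infty$, then $-\Psi(v)\to\infty$, so the fixed integral $\Phi(\lambda)=\int_{u(t,0+)}^{u(t,\lambda)}\frac{dv}{-\Psi(v)}$ being constant while the integrand vanishes forces $u(t,\lambda)-u(t,0+)\to 0$, hence $u(t,\lambda)\to\infty$ and $e^{-xu(t,\lambda)}\to0$ for every $\lambda>0$; thus $\bbP_x(\rZ_t\le a\mid\rT>t)\to0$ for every $a$ by a standard Markov/Laplace-transform argument, which is (ii).

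The main obstacle I anticipate is the bookkeeping around the \emph{non-cancellation of the explosion mass}: because $\bbP_x(\rZ_t=\infty)$ does not vanish for finite $t$, the conditional Laplace transform is a genuine ratio $(e^{-xu(t,\lambda)}-e^{-xu(t,0+)})/(1-e^{-xu(t,0+)})$, and one must carefully track the joint behaviour of $u(t,\lambda)$ and $u(t,0+)$ — in particular their \emph{difference} along the fixed level-set $\Phi(\lambda)=\int_{u(t,0+)}^{u(t,\lambda)}dv/(-\Psi(v))$ — rather than each individually, to get a non-degenerate limit in case (i). Establishing the ODE $\phi'\Psi=\beta$ rigorously from the functional equation (justifying differentiability of $\phi$, or equivalently exploiting the semigroup cocycle $u(t+s,\lambda)=u(t,u(s,\lambda))$ to avoid differentiation) is the other point requiring care, though it is essentially the same computation as in the extinction case treated by Li and Lambert.
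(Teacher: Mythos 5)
Your proposal follows the same overall plan as the paper (derive the necessary form of a QSD via $\Phi$, verify sufficiency, then establish the Yaglom dichotomy from the asymptotics of $u(t,\lambda)-u(t,0+)$ along the level set $\Phi(\lambda)=\int_{u(t,0+)}^{u(t,\lambda)}dv/(-\Psi(v))$), but there is a genuine error at the centre of your computation. Since $Z_t=\infty$ on $\{\rT\le t\}$, you have $e^{-\lambda Z_t}=0$ there, and $\bbP_x(\rT>t)=\bbP_x(Z_t<\infty)=e^{-x\,u(t,0+)}$; the correct conditional Laplace transform is therefore
\begin{equation*}
\bbE_x\big[e^{-\lambda \rZ_t}\,\big|\,\rT>t\big]
=\frac{\bbE_x[e^{-\lambda \rZ_t}]}{\bbP_x(\rT>t)}
=\frac{e^{-x\,u(t,\lambda)}}{e^{-x\,u(t,0+)}}
=e^{-x\,(u(t,\lambda)-u(t,0+))}.
\end{equation*}
You instead wrote the extinction-case ratio $\big(e^{-xu(t,\lambda)}-e^{-xu(t,0+)}\big)/\big(1-e^{-xu(t,0+)}\big)$, whose numerator is even negative because $u(t,\lambda)\ge u(t,0+)$. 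The same slip propagates to your functional equation for the QSD: you wrote the extinction form $e^{-\phi(u(t,\lambda))}=e^{-\beta t}e^{-\phi(\lambda)}+(1-e^{-\beta t})$, but in the explosion case there is no additive constant (the killed mass carries zero weight under $e^{-\lambda\cdot}$), so the identity is simply $\phi(u(t,\lambda))=\beta t+\phi(\lambda)$, from which $-\phi'(\lambda)\Psi(\lambda)=\beta$ is immediate. The ODE you assert does \emph{not} follow from the functional equation you wrote (that one gives $\phi'\Psi=\beta(e^{\phi}-1)$). With the corrected formula $e^{-x(u(t,\lambda)-u(t,0+))}$ the rest of your dichotomy analysis is exactly the paper's argument and goes through cleanly.

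On the existence of $\mu_\beta$ for all $\beta>0$ you take a genuinely different and more direct route than the paper: you observe that $-\Psi$ is a (strictly positive) Bernstein function by (\ref{EqPsiFV}) (using $D\le 0$), hence $\Phi'=1/(-\Psi)$ is completely monotone, so $\Phi$ is a Bernstein function with $\Phi(0)=0$ and $\Phi(\infty)=\infty$, and $e^{-\beta\Phi}$ is automatically the Laplace transform of an infinitely divisible law on $(0,\infty)$. This is correct and elegant. The paper instead proves case (i) by identifying the Yaglom limit probabilistically and handles case (ii) by a monotone approximation $\Psi_\epsilon\downarrow\Psi$ with $\Psi_\epsilon(+\infty)>-\infty$; your argument avoids the approximation step entirely at the cost of invoking a bit more Bernstein-function theory.
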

\noindent Let us make some comments. Firstly this theorem implies that $\lambda\mapsto\Phi(\lambda)$ is the Laplace exponent of a subordinator, and so, $\mu_{\mbox{\tiny$\beta$}}$ is the distribution of a $\Phi$-L\'evy process taken at time $\beta$. Secondly there is a similarity with the extinction case: the limiting conditional distribution is trivial iff $\Psi(+\infty)=-\infty$ so that the dichotomy on the value $\Psi(+\infty)$ is the explosive counterpart of the dichotomy on the value $\Psi'(0+)$ in the extinction case. Also, note the similarity in the definition of the Laplace transforms (\ref{EqQSDExtinction}) and (\ref{EqQSD}). However, there are two major differences with the extinction case: firstly there is no restriction on the rates of decay. Secondly, even if the limiting conditional distribution is trivial when $\Psi(+\infty)=-\infty$, there exists a family of QSD.

The following theorem characterises the $Q$-process associated with an explosive CSBP. Let ${\cal F}_t$ be the sigma-field generated by $(\rZ_r,r\in[0,t])$, for any $t\in[0,\infty)$.
\begin{theorem}\label{ThQDistrib}
We assume that $\rT=\rT_\infty < \infty$ almost surely. For each $x > 0$, there exists a distribution $\bbQ_x$ on $\bbD([0,\infty),[0,\infty))$ such that for any $t\geq 0$
\begin{equation*}
\lim\limits_{s\rightarrow\infty}\mathbb{P}_x(\cdot \,| \,\rT>s)_{|\cF_t} = \mathbb{Q}_x(\cdot)_{|\cF_t}
\end{equation*}
Furthermore, $\mathbb{Q}_x$ is the law of the $\Psi^{Q}$-CSBP where
\begin{equation*}
\Psi^{Q}(u) = Du
\end{equation*}
\end{theorem}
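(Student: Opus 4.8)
The plan is to establish the convergence at the level of the finite-dimensional distributions of $\rZ$, by combining the Markov property with an explicit expression for the non-explosion probability. Since $q=\infty$ one has $\rT=\rT_\infty$ almost surely and $\{\rT>r\}=\{\rZ_r<\infty\}$ up to a null set, so letting $\lambda\downarrow 0$ in (\ref{EqLogLaplace}) gives, for every $y\in[0,\infty)$ and $r\geq 0$,
\begin{equation*}
\bbP_y(\rT>r)=e^{-y\,w(r)}\,,\qquad w(r):=\lim_{\lambda\downarrow 0}u(r,\lambda)\,.
\end{equation*}
By (\ref{EqDefUt}), $w$ is characterised by $\Phi(w(r))=r$ — with $\Phi$ as in Theorem~\ref{ThQSD}, finite by (\ref{EqExplo}) — and satisfies the flow identity $w(a+b)=u(b,w(a))$ for $a>0$, obtained by letting $\lambda\downarrow 0$ in $u\big(b,u(a,\lambda)\big)=u(a+b,\lambda)$. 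A first point to settle is that $w(r)\to\infty$, equivalently $\Phi(\infty)=\infty$: since $-\Psi$ is concave, increasing and vanishes at $0$, the ratio $v\mapsto-\Psi(v)/v$ is non-increasing, hence $-\Psi(v)\leq Cv$ for large $v$ and $\int^{\infty}dv/(-\Psi(v))=\infty$. Thus the conditioning on $\{\rT>s\}$ is non-degenerate for every $s$, while $w(s)\uparrow\infty$.

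The analytic core is the behaviour of the flow $u(t,\cdot)$ at $+\infty$: for all $t,\lambda\geq 0$,
\begin{equation*}
u(t,\mu+\lambda)-u(t,\mu)\ \underset{\mu\to\infty}{\longrightarrow}\ \lambda\,e^{-Dt}\,.
\end{equation*}
To see this, set $\delta_\mu(t):=u(t,\mu+\lambda)-u(t,\mu)$, which is $\geq 0$ because $u(t,\cdot)$ is non-decreasing. Subtracting the integrated forms of (\ref{EqDefUt}) at $\mu+\lambda$ and at $\mu$ and applying the mean value theorem to $\Psi$ yields $\delta_\mu(t)=\lambda-\int_0^t\Psi'(\zeta_r)\,\delta_\mu(r)\,dr$ for some $\zeta_r\geq u(r,\mu)\geq\mu$, the last inequality because $r\mapsto u(r,\mu)$ is non-decreasing (as $\Psi\leq 0$). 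Convexity of $\Psi$ makes $\Psi'$ non-decreasing with $\Psi'(+\infty)=\lim_{u\to\infty}\Psi(u)/u=D$, so that $-D\leq-\Psi'(\zeta_r)\leq-\Psi'(\mu)$ for all $r$; Gronwall's lemma, applied in both directions, then sandwiches $\lambda e^{-Dt}\leq\delta_\mu(t)\leq\lambda e^{-\Psi'(\mu)t}$, and $\Psi'(\mu)\to D$ gives the limit. A routine monotonicity argument — squeezing $\delta_s$ between $\delta\pm\varepsilon$ — upgrades this to: $u(t,\mu_s+\delta_s)-u(t,\mu_s)\to\delta\,e^{-Dt}$ whenever $\mu_s\to\infty$ and $\delta_s\to\delta\geq 0$.

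With these two facts the theorem reduces to a computation. Fix $t\geq 0$, times $0\leq t_1<\cdots<t_n\leq t$, reals $\lambda_1,\dots,\lambda_n\geq 0$ and $s>t$. Iterating the Markov property together with the branching property, then sending $\theta\downarrow 0$ in $\mathbf{1}_{\rZ_s<\infty}=\lim_{\theta\downarrow 0}e^{-\theta\rZ_s}$, one obtains
\begin{equation*}
\bbE_x[e^{-\sum_i\lambda_i\rZ_{t_i}}\,\mathbf{1}_{\rT>s}]=e^{-x\,V(s)},\qquad V(s):=u\big(t_1,\lambda_1+u(t_2-t_1,\lambda_2+\cdots+u(t_n-t_{n-1},\lambda_n+w(s-t_n))\cdots)\big),
\end{equation*}
while the case $\lambda\equiv 0$, via the flow identity, gives $\bbP_x(\rT>s)=e^{-x\,w(s)}$ with $w(s)$ equal to the same nested expression with all $\lambda_i=0$. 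I would then divide, let $s\to\infty$, and peel the nest from the inside outwards: the innermost argument $w(s-t_n)$ tends to $+\infty$, hence so does every partial nest, the gap between the $\lambda$-nest and the $0$-nest converges at each level, and the extended asymptotic lemma turns each outer layer $u(t_k-t_{k-1},\cdot)$ into multiplication of that gap by $e^{-D(t_k-t_{k-1})}$. Telescoping, this produces
\begin{equation*}
\bbE_x[e^{-\sum_i\lambda_i\rZ_{t_i}}\,|\,\rT>s]\ \underset{s\to\infty}{\longrightarrow}\ \exp\!\Big(-x\sum_i\lambda_i e^{-Dt_i}\Big),
\end{equation*}
the joint Laplace transform of $(\rZ_{t_1},\dots,\rZ_{t_n})$ under the $\Psi^{Q}$-CSBP started at $x$, for which $u^{Q}(t,\lambda)=\lambda e^{-Dt}$ and hence $\rZ_r\equiv x e^{-Dr}$ almost surely.

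By uniqueness of Laplace transforms, the finite-dimensional distributions of $\rZ$ under $\bbP_x(\cdot\,|\,\rT>s)$ converge, as $s\to\infty$, to those of the $\Psi^{Q}$-CSBP; since the limit is carried by the deterministic path $r\mapsto xe^{-Dr}$, which never reaches $\infty$, identifying the limiting consistent family as a law $\bbQ_x$ on $\bbD([0,\infty),[0,\infty))$ and deducing the announced convergence of the restrictions to $\cF_t$ is routine. The genuinely delicate step is the asymptotic behaviour of $u(t,\cdot)$ at $+\infty$, together with the extension allowing a varying perturbation, since this is what legitimises the layer-by-layer resolution of the nested limit; the remaining ingredients — the explicit non-explosion probability, the iteration of the Markov and branching properties, and the identification of the limit as the $\Psi^{Q}$-CSBP — are straightforward.
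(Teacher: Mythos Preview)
Your proof is correct and follows the same overall scheme as the paper: reduce to convergence of finite-dimensional Laplace transforms, express these via the nested flow of $u(t,\cdot)$, and control each layer through the asymptotic behaviour of $u(t,\mu+\lambda)-u(t,\mu)$ as $\mu\to\infty$. The difference lies in how that key asymptotic is obtained. The paper first proves Proposition~\ref{PropDt} (computing $d_t=\lim_{\lambda\to\infty}u(t,\lambda)/\lambda=e^{-Dt}$ via the identity $\log(u(t,\lambda)/\lambda)=-\int_0^t\Psi(u(s,\lambda))/u(s,\lambda)\,ds$), then invokes the concavity of $\lambda\mapsto u(t,\lambda)$, read off from the L\'evy--Khintchine representation~(\ref{EqLKUt}), to deduce $\partial_\lambda u(t,\lambda)\to d_t$; the induction is phrased accordingly. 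Your route is a direct Gronwall sandwich on the difference $\delta_\mu$, using only the ODE and the monotonicity of $\Psi'$ towards $D$; this is slightly more elementary in that it avoids appealing to the subordinator form of $u(t,\cdot)$, at the cost of not making the connection with the drift coefficient $d_t$ explicit. For the passage from finite-dimensional convergence to convergence on $\cF_t$, the paper cites a criterion for non-decreasing processes with continuous limit (Th~VI.3.37 in~\cite{JacodShiryaev}); your observation that the limit is a deterministic continuous path is an adequate substitute.
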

\noindent The $Q$-process appears as the $\Psi$-CSBP from which one has removed all the jumps: only the deterministic part remains, see also the forthcoming Proposition \ref{PropDt}. Notice that the $Q$-process cannot be defined through a $h$-transform of the CSBP: actually the distribution of the $Q$-process on $\bbD([0,t],[0,\infty))$ is not even absolutely continuous with respect to that of the $\Psi$-CSBP, except when the L\'evy measure $\nu$ is finite.

\medskip

When $\Psi(+\infty)=-\infty$, Theorem \ref{ThQSD} shows that the process conditioned on non-explosion converges to a trivial limit. In the next theorem, under the assumption that the branching mechanism is regularly varying at $0$ we propose a rescaling of the CSBP conditioned on non-explosion such that it converges to a non-trivial limit. Recall that we call slowly varying function at $0$ any continuous map $L:(0,\infty)\rightarrow(0,\infty)$ such that for any $a \in (0,\infty)$, $L(au)/L(u)\rightarrow 1$ as $u\downarrow 0$.
\begin{theorem}\label{ThRegularly}
Suppose that $\Psi(u)=-u^{1-\alpha}L(u)$ with $L$ a slowly varying function at $0$ and $\alpha \in (0,1)$, and assume that $\Psi(+\infty)=-\infty$. Consider any function $f:[0,\infty)\rightarrow(0,\infty)$ satisfying $\Psi\big(f(t)^{-1}\big)f(t) \sim \Psi(u(t,0+))$ as $t\rightarrow\infty$. Then the following convergence holds true:
\begin{equation*}
\forall x,\lambda \in(0,\infty),\;\;\bbE_x\Big[e^{-\lambda \rZ_t / f(t)} \, \big| \, t < \rT\Big] \underset{t\rightarrow\infty}{\longrightarrow} e^{-x\,\lambda^{\alpha} / \alpha}
\end{equation*}
\end{theorem}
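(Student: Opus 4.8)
The plan is to reduce the statement to the asymptotics of the deterministic flow $u$, and then to combine the regular variation of $\Psi$ at $0$ with the convexity of $\Psi$. First I would make the conditional Laplace transform explicit. Since $\rT=\rT_\infty$ almost surely and $\infty$ is absorbing, $\{\rT>t\}=\{\rZ_t<\infty\}$ up to a null set, so that $\bbP_x(\rT>t)=\bbP_x(\rZ_t<\infty)=e^{-x\,u(t,0+)}$ (letting $\mu\downarrow0$ in (\ref{EqLogLaplace})), while $\bbE_x[e^{-\mu\rZ_t}\mathbf{1}_{\{\rT>t\}}]=\bbE_x[e^{-\mu\rZ_t}]=e^{-x\,u(t,\mu)}$ for every $\mu>0$. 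Taking $\mu=\lambda/f(t)$ gives
\[
\bbE_x\big[e^{-\lambda\rZ_t/f(t)}\,\big|\,t<\rT\big]=\exp\Big(-x\big(u(t,\lambda/f(t))-u(t,0+)\big)\Big),
\]
so it is enough to prove that $u(t,\lambda/f(t))-u(t,0+)\to\lambda^\alpha/\alpha$ as $t\to\infty$.

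Integrating the ODE (\ref{EqDefUt}) yields $u(t,\mu)=\Phi^{-1}(t+\Phi(\mu))$ for every $\mu>0$, where $\Phi(\mu)=\int_0^\mu du/(-\Psi(u))$ is the function from Theorem~\ref{ThQSD} (finite by (\ref{EqExplo})); one has $\Phi(0+)=0$, and since $u(t,\mu)\uparrow\infty$ as $t\to\infty$ one gets $\Phi(\infty)=\infty$, so $\Phi$ is an increasing bijection of $(0,\infty)$. In particular $a_t:=u(t,0+)=\Phi^{-1}(t)\to\infty$. Writing $r_t:=\Phi(\lambda/f(t))$ and $b_t:=u(t,\lambda/f(t))=\Phi^{-1}(t+r_t)\ge a_t$, the goal becomes $b_t-a_t\to\lambda^\alpha/\alpha$.

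The next step is to identify the limit of $r_t\,(-\Psi(a_t))$. The defining relation $\Psi(f(t)^{-1})f(t)\sim\Psi(u(t,0+))$ reads $-f(t)\,\Psi(1/f(t))\sim-\Psi(a_t)$, whose right-hand side tends to $+\infty$ because $\Psi(+\infty)=-\infty$; since the map $w\mapsto-\Psi(w)/w=w^{-\alpha}L(w)$ is non-increasing (convexity of $\Psi$) with limit $+\infty$ only as $w\downarrow0$, and $-f(t)\Psi(1/f(t))$ is the value of this map at $w=1/f(t)$, this forces $f(t)\to\infty$. As $v\mapsto v^{\alpha-1}/L(v)$ is regularly varying at $0$ with index $\alpha-1>-1$, Karamata's theorem gives $\Phi(\mu)\sim\mu^\alpha/(\alpha L(\mu))$ as $\mu\downarrow0$; together with $L(\lambda/f(t))\sim L(1/f(t))$ this yields $r_t\sim\lambda^\alpha f(t)^{-\alpha}/(\alpha L(1/f(t)))$. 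On the other hand $-f(t)\Psi(1/f(t))=f(t)^\alpha L(1/f(t))$, so the hypothesis on $f$ gives $-\Psi(a_t)\sim f(t)^\alpha L(1/f(t))$. Multiplying the two asymptotic equivalences, $r_t\,(-\Psi(a_t))\to\lambda^\alpha/\alpha$; in particular $r_t\to0$ and $r_t\,(-\Psi(a_t))$ stays bounded.

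Finally I would sandwich $b_t-a_t$. Since $-\Psi$ is non-decreasing, $-\Psi(a_t)\le-\Psi(v)\le-\Psi(b_t)$ for $v\in[a_t,b_t]$, and integrating $du/(-\Psi(u))$ over $[a_t,b_t]$ (which equals $r_t$) gives
\[
r_t\,(-\Psi(a_t))\;\le\;b_t-a_t\;\le\;r_t\,(-\Psi(b_t)).
\]
It remains to show $-\Psi(b_t)/(-\Psi(a_t))\to1$: using the monotonicity of $w\mapsto-\Psi(w)/w$ once more, $-\Psi(b_t)\le(b_t/a_t)\,(-\Psi(a_t))$, hence $b_t-a_t\le r_t\,(-\Psi(a_t))\,(b_t/a_t)$; dividing by $a_t$ and using $r_t\,(-\Psi(a_t))/a_t\to0$ forces $b_t/a_t\to1$, whence $1\le-\Psi(b_t)/(-\Psi(a_t))\le b_t/a_t\to1$. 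Plugging this back into the sandwich gives $b_t-a_t\to\lambda^\alpha/\alpha$, and the theorem follows. I expect this last step to be the main obstacle: the hypothesis only controls $\Psi$ near $0$, whereas the relevant arguments $a_t=u(t,0+)$ and $b_t$ diverge, and it is the convexity of $\Psi$ — through the monotonicity of $w\mapsto\Psi(w)/w$ — that transfers the required slow variation to these large arguments.
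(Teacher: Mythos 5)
Your proof is correct, and while the overall strategy coincides with the paper's --- reduce to showing $u(t,\lambda/f(t))-u(t,0+)\to\lambda^\alpha/\alpha$, establish $\Phi(\mu)\sim\mu^\alpha/(\alpha L(\mu))$ as $\mu\downarrow 0$, and combine with the defining property of $f$ to get $\Phi(\lambda/f(t))\,(-\Psi(a_t))\to\lambda^\alpha/\alpha$ --- you handle the crucial transfer from this equivalence to $b_t-a_t\to\lambda^\alpha/\alpha$ by a genuinely different and somewhat cleaner argument. The paper's Lemma~\ref{LemmaSimInt} compares $\int_{a_t}^{b_t}dv/\Psi(v)$ to $\int_{a_t}^{b_t}dv/\Psi(a_t)$ by bounding $\Psi(a_t)-\Psi(v)$ through $\Psi'(a_t)$, using $\Psi'(v)/\Psi(v)\to 0$ as $v\to\infty$, and separately proving boundedness of $b_t-a_t$ via a linear lower bound on $\Psi$ and a logarithmic inequality. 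You avoid $\Psi'$ altogether: the plain monotonicity of $-\Psi$ gives the two-sided bound $r_t\,(-\Psi(a_t))\le b_t-a_t\le r_t\,(-\Psi(b_t))$, and the monotonicity of $w\mapsto\Psi(w)/w$ (convexity plus $\Psi(0)=0$) gives $-\Psi(b_t)\le (b_t/a_t)(-\Psi(a_t))$, which forces $b_t/a_t\to 1$ because $r_t(-\Psi(a_t))/a_t\to 0$, and the squeeze closes. For the asymptotics of $\Phi$ you invoke Karamata's theorem where the paper integrates by parts and cites Lamperti's regular-variation result; this is essentially the same fact in two guises. The net effect is a proof that replaces the paper's derivative-based comparison lemma and the associated boundedness argument by a short algebraic squeeze relying only on the monotone ratio $\Psi(w)/w$.
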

\noindent Observe that the limit displayed by this theorem is the Laplace transform of the QSD associated with $\Psi(u)=-u^{1-\alpha}$.
\begin{example}
When $\Psi(u)=-k\,u^{1-\alpha}$ with $k > 0$ and $\alpha \in (0,1)$, we have $f(t) \sim (\alpha k t)^{(1-\alpha)/\alpha^2}$ as $t\rightarrow\infty$. When $\Psi(u)=-c\, u -k\, u^{1-\alpha}$ with $k,c > 0$ and $\alpha \in (0,1)$, we have $f(t) \sim (k/c)^{(1-\alpha)/\alpha^2}e^{ct/\alpha}$ as $t\rightarrow\infty$.
\end{example}

The proof of Theorem \ref{ThRegularly} is inspired by calculations of Slack in~\cite{Slack68} where it is shown that any critical Galton-Watson process with a regularly varying generating function can be properly rescaled so that, conditioned on non-extinction, it converges towards a non-trivial limit. For completeness we also adapt the result of Slack to critical CSBP conditioned on non-extinction.
\begin{proposition}\label{PropCritical}
Suppose that $\Psi(u)=u^{1+\alpha}L(u)$ with $L$ a slowly varying function at $0$ and $\alpha\in(0,1]$. Assume that $\rT=\rT_0 <\infty$ almost surely. Fix any function $f:[0,\infty) \rightarrow (0,\infty)$ verifying $f(t) \sim u(t,\infty)$ as $t\rightarrow\infty$. Then we have the following convergence
\begin{equation*}
\forall x,\lambda \in (0,\infty),\;\;\bbE_x\big[e^{-\lambda\, \rZ_t f(t)}\, | \, t < \rT\big] \underset{t\rightarrow\infty}{\longrightarrow} 1 - \big(1+\lambda^{-\alpha}\big)^{-1/\alpha}
\end{equation*}
\end{proposition}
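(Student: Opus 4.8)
The plan is to turn the conditional Laplace transform into an explicit function of $u(t,\cdot)$ and then read off the limit from the regular variation of $\Psi$ at $0$. First, since $\Psi$ is critical with $\rT=\rT_0<\infty$ a.s., condition (\ref{EqExtin}) holds; moreover $\Psi$ is convex with $\Psi(0)=\Psi'(0+)=0$ and $L>0$, so $\Psi>0$ on $(0,\infty)$ and $\Phi(\lambda):=\int_\lambda^{+\infty}du/\Psi(u)$ is a finite, strictly decreasing bijection from $(0,\infty)$ onto $(0,\infty)$ — finiteness near $+\infty$ is (\ref{EqExtin}), and divergence at $0$ holds because $1/\Psi$ is regularly varying at $0$ with index $-1-\alpha<-1$. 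Separating variables in (\ref{EqDefUt}) gives $u(t,\lambda)=\Phi^{-1}(t+\Phi(\lambda))$ for $\lambda\in(0,\infty]$ with the convention $\Phi(\infty):=0$; in particular $u(t,\infty)=\Phi^{-1}(t)$ and $\bbP_x(\rZ_t=0)=\lim_{\mu\to\infty}\bbE_x[e^{-\mu\rZ_t}]=e^{-xu(t,\infty)}$. Hence, for every $\mu\in(0,\infty)$,
\begin{equation*}
\bbE_x\big[e^{-\mu\rZ_t}\,\big|\,t<\rT\big]=\frac{e^{-xu(t,\mu)}-e^{-xu(t,\infty)}}{1-e^{-xu(t,\infty)}}=1-\frac{1-e^{-xu(t,\mu)}}{1-e^{-xu(t,\infty)}}.
\end{equation*}

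Since $f(t)\sim u(t,\infty)=\Phi^{-1}(t)\to0$, both $u(t,\infty)$ and $u(t,\lambda f(t))$ vanish as $t\to\infty$, so by $1-e^{-xv}\sim xv$ as $v\downarrow0$ it suffices to prove $u(t,\lambda f(t))/u(t,\infty)\to(1+\lambda^{-\alpha})^{-1/\alpha}$, after which one takes $\mu=\lambda f(t)$ above. By Karamata's theorem $\Phi(\lambda)\sim\lambda^{-\alpha}/(\alpha L(\lambda))$ as $\lambda\downarrow0$, so $\Phi$ is regularly varying at $0$ with index $-\alpha$, and therefore $\Phi^{-1}$ — hence $u(t,\infty)$ and $f$ — is regularly varying at $\infty$ with index $-1/\alpha$. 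Slow variation of $L$ gives $\Phi(\lambda f(t))\sim\lambda^{-\alpha}\Phi(f(t))$, and $f(t)\sim\Phi^{-1}(t)$ together with the regular variation of $\Phi$ gives $\Phi(f(t))\sim\Phi(\Phi^{-1}(t))=t$; thus $t+\Phi(\lambda f(t))=t\,(1+\lambda^{-\alpha}+o(1))$. Feeding this into $\Phi^{-1}$ via the uniform convergence theorem for regularly varying functions yields $u(t,\lambda f(t))=\Phi^{-1}\big(t(1+\lambda^{-\alpha}+o(1))\big)\sim(1+\lambda^{-\alpha})^{-1/\alpha}\,\Phi^{-1}(t)=(1+\lambda^{-\alpha})^{-1/\alpha}\,u(t,\infty)$, which is exactly the required ratio.

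Everything is elementary once $\Phi$ and the identity $u(t,\lambda)=\Phi^{-1}(t+\Phi(\lambda))$ are set up; the only genuinely delicate points are the asymptotic manipulations with regularly varying functions. One must justify $\Phi(\lambda f(t))\sim\lambda^{-\alpha}t$ even though $f$ is known only up to asymptotic equivalence with $u(t,\infty)$ — this uses the explicit Karamata asymptotic for $\Phi$ together with slow variation of $L$, or equivalently the fact that $\Phi(cg(t))/\Phi(g(t))\to c^{-\alpha}$ locally uniformly in $c>0$ — and one must push the non-constant multiplicative perturbation $1+\lambda^{-\alpha}+o(1)$ through $\Phi^{-1}$, which requires the uniform (not merely pointwise) convergence theorem for regularly varying functions. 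These are the steps I would write out most carefully; the remainder is bookkeeping, and one can always take $f=u(t,\infty)$ at the outset, which removes the first difficulty but not the second.
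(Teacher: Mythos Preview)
Your proof is correct and follows the same overall strategy as the paper: both write the conditional Laplace transform as $1-\dfrac{1-e^{-xu(t,\lambda f(t))}}{1-e^{-xv(t)}}$ with $v(t)=u(t,\infty)$, reduce to showing $u(t,\lambda f(t))/v(t)\to(1+\lambda^{-\alpha})^{-1/\alpha}$, and exploit the semigroup identity $u(t,\lambda f(t))=v\big(t+\Phi(\lambda f(t))\big)$. The difference lies in how the two key asymptotics $\Phi(\lambda f(t))\sim\lambda^{-\alpha}t$ and $v(t+s)/v(t)\to(1+\lambda^{-\alpha})^{-1/\alpha}$ are established. You invoke Karamata's theorem to get $\Phi(\lambda)\sim\lambda^{-\alpha}/(\alpha L(\lambda))$, then the inversion and uniform convergence theorems for regularly varying functions to push the perturbation $1+\lambda^{-\alpha}+o(1)$ through $\Phi^{-1}$. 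The paper instead works directly with the ODE $v'=-\Psi(v)$: it derives $v(r)^{\alpha}L(v(r))\sim 1/(\alpha r)$ via a Ces\`aro average of $v\Psi'(v)/\Psi(v)\to 1+\alpha$ (Lamperti), then computes $\log\big(v(t+s)/v(t)\big)=-\int_t^{t+s}v(r)^{\alpha}L(v(r))\,dr\sim -\frac{1}{\alpha}\log(1+\lambda^{-\alpha})$ by integrating that asymptotic. Your route is more structural and concise but leans on named black boxes from regular variation theory; the paper's is more self-contained but has its own delicate point, namely justifying the replacement of the integrand by its asymptotic over the moving interval $[t,t+s]$.
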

\noindent We recover in particular the finite variance case (\ref{CriticalLiLambert}) of Lambert and Li. Our result also covers the so-called stable branching mechanisms $\Psi(u)=u^{1+\alpha}$ with $\alpha\in (0,1]$.
\paragraph{Organisation of the paper.} We start with a study of continuous-time Galton-Watson processes (which are the discrete-state counterparts of CSBP): we provide a complete description of the QSD when this process explodes in finite time almost surely and compare the results with the continuous-state case. In the third section we prove Theorems \ref{ThQSD}, \ref{ThQDistrib} and \ref{ThRegularly}. Finally in the fourth section we prove Proposition \ref{PropCritical}.

\section{The discrete case}
A discrete-state branching process $(\cZ_t,t\geq 0)$ is a continuous-time Markov process taking values in $\mathbb{Z}_+\cup\{+\infty\}$ that verifies the branching property (we refer to Chapter V of Harris~\cite{Harris63} for the proofs of the following facts). It can be seen as a Galton-Watson process with offspring distribution $\xi$ where each individual has an independent exponential lifetime with parameter $c > 0$. Let us denote by $\phi(\lambda) =\sum_{k=0}^{\infty}\lambda^k\xi(k),\, \forall\lambda \in [0,1]$ the generating function of the Galton-Watson process. We denote by $\bP_n$ the law on the space $\bbD([0,\infty),\bbZ_+\cup\{+\infty\})$ of $\cZ$ starting from $n\in\bbZ_+\cup\{+\infty\}$, and $\bE_n$ the related expectation operator. The semigroup of the DSBP is characterised via the Laplace transform (see Chapter V.4 of~\cite{Harris63})
\begin{equation}\label{EqF}
\forall r \in (0,1),\forall t\in[0,\infty),\;\;\bE_n\big[\, r^{\cZ_t}\, \big]=F(t,r)^{n}\;\mbox{where}\;\int_r^{F(t,r)}\!\!\!\frac{dx}{c\, (\phi(x)-x)}=t
\end{equation}
Let $\tau$ be the lifetime of $\cZ$, that is, the infimum of the extinction time $\tau_0$ and the explosion time $\tau_{\infty}$. Taking the limits $r\downarrow 0$ and $r\uparrow 1$ in (\ref{EqF}) one gets
$$ \bP_n(\tau_0 \leq t) = F(t,0+)^n\;\;,\;\;\bP_n(\tau_{\infty} < t) = 1-F(t,1-)^n$$
In this section, we assume that there is explosion in finite time almost surely. Results of Chapters V.9 and V.10 of~\cite{Harris63} then entail that the smallest solution of the equation $\phi(x)=x$ equals $0$ (and so $\xi(0)=0$) and that $\int_{1-}\frac{dx}{c\, (\phi(x)-x)}$ is finite. This allows to define
\begin{equation}\label{EqPhiDSBP}
\Phi(r) := \int_1^r\frac{dx}{c\, (\phi(x)-x)},\; r\in (0,1]
\end{equation}
Clearly $r\mapsto\Phi(r)$ is the inverse map of $t\mapsto F(t,1-)$, that is for all $t \geq 0, \Phi\big(F(t,1-)\big)=t$. 
We say that a measure $\mu$ on $\mathbb{N}=\{1,2,\ldots\}$ is a quasi-stationary distribution (QSD) for $\cZ$ if
\begin{equation*}
\bP_{\mu}(\cZ_t\in\cdot\, | \,\tau > t) = \mu(\cdot)
\end{equation*}
From the Markov property, we deduce that $\tau$ has an exponential distribution under $\bP_{\mu}$, the parameter of which is called the rate of decay of $\mu$.
\begin{theorem}\label{ThDSBP}
Suppose there is explosion in finite time almost surely. Let $\beta_0 := c\, (1-\xi(1))$. There is a unique quasi-stationary distribution $\mu_{\mbox{\tiny$\beta$}}$ associated with the rate of decay $\beta$ if and only if $\beta$ is of the form $n\beta_0$, with $n\in\mathbb{N}$. It is characterised by its Laplace transform
\begin{equation}\label{EqLaplaceDSBP}
\sum_k\mu_{\mbox{\tiny$\beta$}}(\{k\})r^k = e^{-\beta\Phi(r)},\ \forall r \in (0,1]
\end{equation}
For any initial condition $n\in \mathbb{N}$ we have
\begin{equation*}
\lim\limits_{t\rightarrow\infty}\bP_n(\, \cZ_t\in\cdot \, | \, \tau>t \, ) = \mu_{\mbox{\tiny$n\beta_0$}}(\cdot)
\end{equation*}
\end{theorem}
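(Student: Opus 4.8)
The plan is to determine the generating function of an arbitrary QSD, read off from it the constraint on the rate of decay, and then check conversely that every admissible rate is realised by the conditioned process. I will use repeatedly that explosion in finite time forces $\xi(0)=0$ (hence $\tau_0=\infty$ and $\{\tau>t\}=\{\cZ_t<\infty\}$) and $\xi(1)<1$ (otherwise $\cZ$ is constant), so $\beta_0>0$; moreover the smallest root of $\phi(x)=x$ being $0$, together with the convexity of $\phi$ and $\phi(0)=0$, $\phi(1)=1$, gives $\phi(x)<x$ strictly on $(0,1)$, so $\Phi$ is a well-defined finite strictly decreasing bijection from $(0,1]$ onto $[0,\infty)$. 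Rewriting the identity in (\ref{EqF}) via (\ref{EqPhiDSBP}) yields $\Phi(F(t,r))=t+\Phi(r)$ for all $r\in(0,1]$, $t\geq0$; in particular $F(t,1-)=\Phi^{-1}(t)$.

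First I would pin down (\ref{EqLaplaceDSBP}). Let $\mu$ be a QSD with rate $\beta$ and $g(r):=\sum_k\mu(\{k\})r^k$. Conditioning on the initial value and using $\bE_m[r^{\cZ_t}\mathbf{1}_{\{\cZ_t<\infty\}}]=F(t,r)^m$ for $r\in(0,1)$ gives $\bE_\mu[r^{\cZ_t}\mathbf{1}_{\{\tau>t\}}]=\sum_m\mu(\{m\})F(t,r)^m=g(F(t,r))$, whereas the defining property of a QSD together with the exponential law of $\tau$ under $\bP_\mu$ gives $\bE_\mu[r^{\cZ_t}\mathbf{1}_{\{\tau>t\}}]=e^{-\beta t}g(r)$. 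Hence $g(F(t,r))=e^{-\beta t}g(r)$; letting $r\uparrow1$ and using $g(1-)=1$ gives $g(\Phi^{-1}(t))=e^{-\beta t}$, i.e.\ $g(r)=e^{-\beta\Phi(r)}$ on $(0,1]$. Since a probability measure on $\mathbb{N}$ is determined by its generating function, this also yields uniqueness of the QSD with a prescribed rate.

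Next I would extract the constraint $\beta\in\beta_0\mathbb{N}$ from the behaviour of $\Phi$ at $0$. A short computation using $\xi(0)=0$ and the analyticity of $\phi$ on $[0,1)$ gives $x-\phi(x)=\frac{\beta_0}{c}x+O(x^2)$ as $x\downarrow0$, whence $\Phi(r)=-\frac1{\beta_0}\log r+C+o(1)$ as $r\downarrow0$ for a finite constant $C$, so $e^{-\beta\Phi(r)}\sim e^{-\beta C}r^{\beta/\beta_0}$. On the other hand, if $k_0$ denotes the smallest atom of $\mu$, then $g(r)\sim\mu(\{k_0\})r^{k_0}$ as $r\downarrow0$; comparing the two asymptotics forces $\beta/\beta_0=k_0\in\mathbb{N}$, so a QSD with rate $\beta$ can exist only when $\beta\in\beta_0\mathbb{N}$. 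For the converse I would work with the conditioned process started from $n\in\mathbb{N}$: for $r\in(0,1)$ one has $\bE_n[r^{\cZ_t}\mid\tau>t]=\big(\Phi^{-1}(t+\Phi(r))/\Phi^{-1}(t)\big)^n$, and inserting the expansion of $\Phi$ at $0$ into the identity $\Phi(\Phi^{-1}(t+\Phi(r)))-\Phi(\Phi^{-1}(t))=\Phi(r)$ shows this tends to $e^{-n\beta_0\Phi(r)}$ as $t\to\infty$. These are generating functions of probability measures on $\mathbb{N}$ converging pointwise on $(0,1)$ to a limit that tends to $1$ as $r\uparrow1$ (and to $0$ as $r\downarrow0$), so the continuity theorem for generating functions produces a probability measure $\mu_{n\beta_0}$ on $\mathbb{N}$ with generating function $e^{-n\beta_0\Phi(r)}$ and $\bP_n(\cZ_t\in\cdot\mid\tau>t)\to\mu_{n\beta_0}$. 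Finally, $\Phi(F(t,r))=t+\Phi(r)$ gives $\bE_{\mu_{n\beta_0}}[r^{\cZ_t}\mathbf{1}_{\{\tau>t\}}]=e^{-n\beta_0 t}e^{-n\beta_0\Phi(r)}$, hence $\bP_{\mu_{n\beta_0}}(\tau>t)=e^{-n\beta_0 t}$ and $\bE_{\mu_{n\beta_0}}[r^{\cZ_t}\mid\tau>t]=e^{-n\beta_0\Phi(r)}$, which is the generating function of $\mu_{n\beta_0}$; so $\mu_{n\beta_0}$ is a QSD with rate $n\beta_0$.

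The main obstacle is the logarithmic asymptotic $\Phi(r)=-\beta_0^{-1}\log r+C+o(1)$ at $0$, equivalently the precise linear behaviour of $x-\phi(x)$ near $0$: both directions of the dichotomy $\beta\in\beta_0\mathbb{N}$ and the convergence of the rescaled generating functions rest on it. A secondary point, handled by the continuity theorem once one knows the limiting generating function equals $1$ at $r=1$, is ruling out escape of mass to $\infty$ when passing from pointwise convergence of generating functions to weak convergence.
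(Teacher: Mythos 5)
Your proof is correct, and the overall strategy — reading off $e^{-\beta\Phi(r)}$ as the forced generating function of any QSD, obtaining the constraint $\beta\in\beta_0\mathbb{N}$ from the power-law behaviour of $e^{-\beta\Phi(r)}$ as $r\downarrow0$, and deriving the Yaglom limit from the convergence of $\big(F(t,r)/F(t,1-)\big)^n$ — is the same as the paper's. The one technical divergence is in the ``only if'' step: you establish the sharper expansion $\Phi(r)=-\beta_0^{-1}\log r+C+o(1)$ directly from $c(x-\phi(x))=\beta_0 x+O(x^2)$, while the paper instead first proves $\mu_{\mbox{\tiny$\beta_0$}}(\{1\})>0$ and then compares the smallest atom of $\mu_{\mbox{\tiny$\beta$}}$ with $\big(\mu_{\mbox{\tiny$\beta_0$}}(\{1\})r\big)^{\beta/\beta_0}$ through the identity $e^{-\beta\Phi(r)}=\big(e^{-\beta_0\Phi(r)}\big)^{\beta/\beta_0}$. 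Your version is a bit more self-contained since it does not need the auxiliary positivity lemma, at the cost of checking integrability of $\frac{1}{c(x-\phi(x))}-\frac{1}{\beta_0 x}$ up to $1$ (which your convergence hypothesis $\int_{1-}\frac{dx}{c(\phi(x)-x)}<\infty$ supplies). You also explicitly verify, via $\Phi(F(t,r))=t+\Phi(r)$, that each candidate limit measure actually satisfies the QSD equation; the paper leaves this direction implicit.
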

Let us make some comments. First there exists only a countable family of QSD. This is due to the restrictive condition that our process takes values in $\bbZ_+\cup\{\infty\}$. Also, observe the similarity with Theorem \ref{ThQSD}: indeed a DSBP can be seen as a particular CSBP starting from an integer and whose branching mechanism is the Laplace exponent of a compound Poisson process with integer-valued jumps. In particular $\nu(\{k\}) = c\,\xi(k+1)$ for all integer $k\geq 1$. Hence the quantity $c(1-\xi(1))$ in the DSBP case corresponds to $\nu(0,\infty)$ in the CSBP case. Finally we mention that the $Q$-process associated with an explosive DSBP is the constant process, that is, the DSBP with the trivial generating function $F(t,r)=r$. This fact can be proved using calculations similar to those in the proof below or it can be deduced from Theorem \ref{ThQDistrib} and the remarks above.
\begin{proof}
We start with the proof of the uniqueness of the QSD for a given rate of decay $\beta > 0$. Let $\mu$ be a QSD and let $\beta > 0$ be its rate of decay. Then we have for all $t\geq 0$
\begin{equation*}
e^{-\beta t}=\mathbb{P}_{\mu}(\tau>t)=\sum_k\mu(\{k\})\mathbb{P}_{k}(\tau>t)=\sum_k\mu(\{k\})F(t,1-)^k
\end{equation*}
Since $F(\Phi(r),1-)=r$ we get
\begin{equation*}
\forall r \in (0,1],\;\;e^{-\beta\Phi(r)}=\sum_k\mu(\{k\})r^k
\end{equation*}
which ensures the uniqueness of the QSD for a given rate of decay. We now prove that whenever $\beta\!=\!n\beta_0$ with $n\in\bbN$, the last expression is indeed the Laplace transform of a probability measure on $\bbN$.
\begin{equation}\label{EqCVDSBP}
\forall n\in\bbN,\;\;\mathbb{E}_n[r^{\mathrm{Z}_t}|\tau>t] = \frac{\mathbb{E}_n[r^{\mathrm{Z}_t};\tau>t]}{\mathbb{P}_n(\tau>t)}= \bigg(\frac{F(t,r)}{F(t,1-)}\bigg)^n
\end{equation}
By $0 \leq F(t,r)\leq F(t,1-)\rightarrow 0$ as $t\rightarrow\infty$, $\phi(x)=\xi(1)x+\cO(x^2)$ as $x\downarrow 0$ and (\ref{EqPhiDSBP}) we get
\begin{eqnarray*}
\Phi(r)=\int_{F(t,1-)}^{F(t,r)}\frac{dx}{c(\phi(x)-x)}&\underset{t\rightarrow\infty}{\sim}&\int_{F(t,1-)}^{F(t,r)}\frac{dx}{cx(\xi(1)-1)} = -\frac{1}{\beta_0}\log\frac{F(t,r)}{F(t,1-)}
\end{eqnarray*}
We deduce that the r.h.s. of (\ref{EqCVDSBP}) converges to $\exp(-\Phi(r)n\beta_0)$ as $t\rightarrow\infty$. From this convergence and the fact that $\Phi(1-)=0$, we deduce that $r\mapsto\exp(-\Phi(r)n\beta_0)$ is the Laplace transform of a probability measure say $\mu_{\mbox{\tiny$n\beta_0$}}$ on $\mathbb{Z}_+$. As $\Phi(0+)= +\infty$, we deduce that this probability measure does not charge $0$. Also, observe that $\mu_{\mbox{\tiny$\beta_0$}}(\{1\}) > 0$. Indeed for all $r\in(0,1)$ we have $\Phi'(r)=-(\beta_0 r)^{-1} - G(r)$ where $G$ is bounded near $0$. Since $\mu_{\mbox{\tiny$\beta_0$}}(\{1\})=-\lim_{r\downarrow 0}\beta_0\Phi'(r)e^{-\beta_0\Phi(r)}$, the strict positivity follows.\vspace{3pt}\\
Fix $\beta > 0$. We now assume that $r\mapsto e^{-\beta\Phi(r)}$ is the Laplace transform of a probability measure on $\bbN$ say $\mu_{\mbox{\tiny$\beta$}}$. Denote by $m \in \mathbb{N}$ the smallest integer such that $\mu_{\mbox{\tiny$\beta$}}(\{m\}) > 0$. Then we have for all $r\in(0,1]$
\begin{eqnarray*}
e^{-\beta\Phi(r)}&=&\mu_{\mbox{\tiny$\beta$}}(\{m\})r^m + \sum_{k>m}\mu_{\mbox{\tiny$\beta$}}(\{k\})r^k=(e^{-\beta_0\Phi(r)})^{\frac{\beta}{\beta_0}}\\
&=&\Big(\mu_{\mbox{\tiny$\beta_0$}}(\{1\})r + \sum_{k>1}\mu_{\mbox{\tiny$\beta_0$}}(\{k\})r^k\Big)^{\frac{\beta}{\beta_0}}
\end{eqnarray*}
This implies that $\mu_{\mbox{\tiny$\beta$}}(\{m\})r^m \sim (\mu_{\mbox{\tiny$\beta_0$}}(\{1\})r)^{\frac{\beta}{\beta_0}}$ as $r\downarrow 0$ and so, $m=\frac{\beta}{\beta_0}\in \mathbb{N}$. Consequently (\ref{EqLaplaceDSBP}) is the Laplace transform of a probability measure on $\bbN$ iff $\beta$ is of the form $n\beta_0$.
\end{proof}

\section{Quasi-stationary distributions and Q-process in the explosive case}
Consider a branching mechanism $\Psi$ of the form (\ref{EqPsi}). It is well-known and can be easily checked from (\ref{EqLogLaplace}) that for any $t\geq 0$ the law of $\rZ_t$ under $\bbP_x$ is infinitely divisible. Consequently $u(t,\cdot)$ is the Laplace exponent of a (possibly killed) subordinator (see Chapter 5.1~\cite{Kyprianou06}). Thanks to the L\'evy-Khintchine formula, there exist $a_t,d_t \geq 0$ and a Borel measure $w_t$ on $(0,\infty)$ with $\int_{(0,\infty)}(1\wedge h)w_t(dh)<\infty$ such that
\begin{equation}\label{EqLKUt}
\forall \lambda\geq 0,\;\;u(t,\lambda) = a_t + d_t\lambda + \int_{(0,\infty)}\!\!\!(1-e^{-\lambda h})\,w_t(dh)
\end{equation}
Note that $a_t\!=\!u(t,0+)$ is positive iff the CSBP has a positive probability to explode in finite time. In the genealogical interpretation, the measure $w_t$ gives the distribution of the clusters of individuals alive at time $t$ who share a same ancestor at time $0$, while the coefficient $d_t$ corresponds to the individuals at time $t$ who do not share their ancestor at time $0$ with other individuals. For further use, we write the integral version of (\ref{EqDefUt}):
\begin{equation}\label{EqUt}
\forall t\geq 0, \forall \lambda \in[0,\infty)\backslash\{q\},\;\;\int_{u(t,\lambda)}^\lambda \frac{du}{\Psi(u)}=t
\end{equation}
The following result shows that the drift $d_t$ is left unchanged when replacing $\Psi$ by $\Psi^Q$ of Theorem \ref{ThQDistrib}: this means that the $Q$-process is obtained by removing all the clusters in the population.
\begin{proposition}\label{PropDt}
When $\Psi$ fulfils (\ref{EqFiniteVar}) then $d_t=e^{-Dt}$ for all $t\geq 0$. Otherwise $d_t=0$ for all $t>0$.
\end{proposition}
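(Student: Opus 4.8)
The plan is to identify the drift $d_t$ as the asymptotic slope of the concave Laplace exponent $u(t,\cdot)$: from the L\'evy--Khintchine representation (\ref{EqLKUt}) one gets $\partial_\lambda u(t,\lambda)=d_t+\int_{(0,\infty)}h\,e^{-\lambda h}w_t(dh)$, and since $\int_{(0,\infty)}(1\wedge h)\,w_t(dh)<\infty$ dominated convergence gives
\[
d_t=\lim_{\lambda\to\infty}\partial_\lambda u(t,\lambda).
\]
So the task reduces to computing $\partial_\lambda u(t,\lambda)$ as $\lambda\to\infty$, and for this I would use the two available descriptions of $u$. Differentiating the integral equation (\ref{EqUt}) in $\lambda$ yields, for $\lambda\neq q$,
\[
\partial_\lambda u(t,\lambda)=\frac{\Psi(u(t,\lambda))}{\Psi(\lambda)},
\]
whereas differentiating the ODE (\ref{EqDefUt}) with respect to the parameter $\lambda$ (legitimate because $\Psi$ is $C^1$ on $(0,\infty)$ and the trajectory $s\mapsto u(s,\lambda)$ stays, for $s\in[0,t]$, in a compact subinterval of $(0,\infty)$) shows that $w(s):=\partial_\lambda u(s,\lambda)$ solves $\partial_s w=-\Psi'(u(s,\lambda))\,w$, $w(0)=1$, so that
\[
\partial_\lambda u(t,\lambda)=\exp\Big(-\!\int_0^t\Psi'(u(s,\lambda))\,ds\Big).
\]

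Case 1: $\Psi$ does not satisfy (\ref{EqFiniteVar}). Then the increasing ratio $\Psi(u)/u$ tends to $+\infty$; in particular $q<\infty$, hence $\Psi$ is positive and increasing on $(q,\infty)$ and $\Psi(\lambda)\to+\infty$. For $\lambda>q$ I bound in (\ref{EqUt}), using $\Psi(u)\geq\Psi(u(t,\lambda))$ on the interval of integration,
\[
t=\int_{u(t,\lambda)}^{\lambda}\frac{du}{\Psi(u)}\leq\frac{\lambda-u(t,\lambda)}{\Psi(u(t,\lambda))}\leq\frac{\lambda}{\Psi(u(t,\lambda))},
\]
so $\Psi(u(t,\lambda))\leq\lambda/t$. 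Inserting this into the first formula for $\partial_\lambda u$ gives $\partial_\lambda u(t,\lambda)\leq\lambda/(t\,\Psi(\lambda))\to0$ as $\lambda\to\infty$, since $\Psi(\lambda)/\lambda\to\infty$. Hence $d_t=0$ for every $t>0$.

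Case 2: $\Psi$ satisfies (\ref{EqFiniteVar}), so $\Psi(u)=Du+\int_{(0,\infty)}(e^{-uh}-1)\nu(dh)$ and $\Psi'(u)=D-\int_{(0,\infty)}h\,e^{-uh}\nu(dh)$ increases to $D$; in particular $\Psi'\leq D$ on $[0,\infty)$. The second formula then gives $\partial_\lambda u(t,\lambda)\geq e^{-Dt}$, whence $d_t\geq e^{-Dt}$. For the reverse bound, observe that $s\mapsto u(s,\lambda)$ is monotone on $[0,\infty)$ (decreasing if $\lambda>q$, increasing if $\lambda<q$), so $m(\lambda):=\inf_{s\in[0,t]}u(s,\lambda)=\min(\lambda,u(t,\lambda))$. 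I claim $m(\lambda)\to\infty$: if $q=\infty$ this is immediate since then $m(\lambda)=\lambda$; if $q<\infty$ then in the finite variation case necessarily $D>0$, so $\Psi(u)\sim Du$ and $\int_c^{\infty}du/\Psi(u)=\infty$ for every $c>q$, which through (\ref{EqUt}) forces $u(t,\lambda)\to\infty$. Consequently $\Psi'(u(s,\lambda))\geq\Psi'(m(\lambda))$ for all $s\in[0,t]$, so $\int_0^t\Psi'(u(s,\lambda))\,ds\geq t\,\Psi'(m(\lambda))\to tD$, and therefore $\partial_\lambda u(t,\lambda)\leq e^{-t\Psi'(m(\lambda))}\to e^{-Dt}$. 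Combined with the lower bound this gives $d_t=e^{-Dt}$, and the identity holds trivially at $t=0$.

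The step I expect to be the main obstacle is the upper bound in the finite variation case: one must rule out that the trajectory $s\mapsto u(s,\lambda)$ dips, over $s\in[0,t]$, into a region where $\Psi'$ is appreciably below $D$, i.e.\ one must show $\inf_{s\le t}u(s,\lambda)\to\infty$ as $\lambda\to\infty$. This is precisely where the monotonicity of $s\mapsto u(s,\lambda)$ and the divergence of $\int^{\infty}du/\Psi(u)$ (absence of finite-time extinction when $q<\infty$) are used; the remainder is routine, the only technical care needed being the justification of the two differentiations of $u$ recalled above.
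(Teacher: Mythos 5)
Your proof is correct, and it takes a genuinely different route from the paper's. You identify $d_t$ as $\lim_{\lambda\to\infty}\partial_\lambda u(t,\lambda)$ and exploit the two representations
$\partial_\lambda u(t,\lambda)=\Psi(u(t,\lambda))/\Psi(\lambda)=\exp\big(-\int_0^t\Psi'(u(s,\lambda))\,ds\big)$,
whereas the paper identifies $d_t$ as $\lim_{\lambda\to\infty}u(t,\lambda)/\lambda$ and works directly with $\log(u(t,\lambda)/\lambda)=-\int_0^t\Psi(u(s,\lambda))/u(s,\lambda)\,ds$. In the finite-variation case both arguments hinge on the same key point (that the trajectory $s\mapsto u(s,\lambda)$ escapes uniformly to infinity on $[0,t]$ as $\lambda\to\infty$), but you then invoke the monotone convergence $\Psi'\uparrow D$ while the paper invokes the monotone convergence $\Psi(u)/u\uparrow D$; these are interchangeable in view of $u\Psi'(u)/\Psi(u)\to 1$. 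The main genuine difference is in the other case: for $\sigma>0$ or $\int_{(0,1)}h\,\nu(dh)=\infty$ the paper simply cites a corollary of Silverstein, whereas your bound $\partial_\lambda u(t,\lambda)=\Psi(u(t,\lambda))/\Psi(\lambda)\leq \lambda/(t\,\Psi(\lambda))\to 0$ is a short self-contained replacement for that citation. So what you gain is a proof that is entirely elementary and does not lean on the external reference, at the modest cost of having to justify differentiation of (\ref{EqDefUt}) and (\ref{EqUt}) in $\lambda$ — which you do correctly.
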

\begin{proof}
Corollary p.1049 in~\cite{Silverstein68} entails that $d_t=0$ for all $t>0$ whenever $\sigma>0$ or $\int_{(0,1)}h\nu(dh)=\infty$. We now assume the converse, namely that $\Psi$ fulfils (\ref{EqFiniteVar}) so that $\Psi(u)/u \rightarrow D$ as $u\rightarrow \infty$. A direct computation shows that $d_t = \lim_{\lambda\rightarrow\infty}u(t,\lambda)/\lambda$. Then for any $t\geq 0, \lambda > 0$
\begin{equation}\label{EqCVDt}
\log\Big(\frac{u(t,\lambda)}{\lambda}\Big) = \int_0^t\frac{\partial_su(s,\lambda)}{u(s,\lambda)}ds = -\int_0^t\frac{\Psi(u(s,\lambda))}{u(s,\lambda)}ds
\end{equation}
If $q\in(0,\infty)$, then for all $\lambda > q$ and all $0\! \leq\! s \!\leq\! t$ we have $q < u(t,\lambda) \leq u(s,\lambda) \leq \lambda$ thanks to (\ref{EqDefUt}) and by (\ref{EqUt}) we deduce that $u(t,\lambda)\uparrow\infty$ as $\lambda\rightarrow\infty$. If $q=\infty$, then for all $\lambda > 0$ and all $0 \!\leq\! s \!\leq\! t$ we have $\lambda \leq u(s,\lambda) \leq u(t,\lambda)$ thanks to (\ref{EqDefUt}) and obviously $u(t,\lambda)\uparrow\infty$ as $\lambda\rightarrow\infty$. Since $\Psi(u)/u \uparrow D$ as $u\rightarrow\infty$ the dominated convergence theorem applied to (\ref{EqCVDt}) yields that $\log(u(t,\lambda)/\lambda) \rightarrow -Dt$ as $\lambda\rightarrow\infty$.
\end{proof}

Until the end of the section, we assume that $\Psi$ verifies (\ref{EqExplo}) and that $q=\infty$. Consequently under $\bbP_x$, $\rZ$ explodes in finite time almost surely and $a_t=u(t,0+)>0$ for all $t>0$. An elementary calculation entails
\begin{equation*}
\forall t\geq 0, x > 0,\;\;\mathbb{P}_x(\rT>t)=e^{-x\, a_t}
\end{equation*}
We introduce for all $\lambda\geq 0$, $\Phi(\lambda):= \int_\lambda^0 du/\Psi(u)$. This non-negative, increasing function admits a continuous inverse, namely the function $t\mapsto a_t$. Also, thanks to Equation (\ref{EqUt}) we deduce the identities
\begin{equation}\label{EqPhiU}
\forall t,\lambda \geq 0,\;\;\Phi(u(t,\lambda)) = t + \Phi(\lambda)\;,\;u(t,\lambda) = u(t+\Phi(\lambda),0+)
\end{equation}
\subsection{Proof of Theorem \ref{ThQSD}}
First we compute the necessary form of the QSD. Fix $\beta > 0$ and suppose that $\mu_{\mbox{\tiny$\beta$}}$ is a QSD with rate of decay $\beta$. We get for all $t\geq 0$
\begin{equation*}
e^{-\beta t}=\mathbb{P}_{\mu_{\mbox{\tiny$\beta$}}}(\rT>t) = \int_{(0,\infty)}\!\!\!\mu_{\mbox{\tiny$\beta$}}(dr)e^{-r\,a_t}
\end{equation*}
Letting $t=\Phi(\lambda)$ for any $\lambda\geq 0$ we obtain
\begin{equation*}
e^{-\beta\Phi(\lambda)} = \int_{(0,\infty)}\mu_{\mbox{\tiny$\beta$}}(dr)e^{-r\lambda}
\end{equation*}
Consequently there is at most one QSD corresponding to the rate of decay $\beta$. Now suppose that the preceding formula defines a probability distribution on $(0,\infty)$ then the following calculation ensures that it is quasi-stationary:
\begin{eqnarray*}
\forall\lambda>0,\;\;\mathbb{E}_{\mu_{\mbox{\tiny$\beta$}}}\big[e^{-\lambda \mathrm{Z}_t}\, | \,\rT > t\big]\!\!&=&\!\!\frac{\mathbb{E}_{\mu_{\mbox{\tiny$\beta$}}}\big[e^{-\lambda \mathrm{Z}_t};\rT > t\big]}{\mathbb{P}_{\mu_{\mbox{\tiny$\beta$}}}(\rT > t)}=\frac{\mathbb{E}_{\mu_{\mbox{\tiny$\beta$}}}\big[e^{-\lambda \mathrm{Z}_t}\big]}{\mathbb{P}_{\mu_{\mbox{\tiny$\beta$}}}(\rT > t)}=\frac{\int_{(0,\infty)}\mu_{\mbox{\tiny$\beta$}}(dr)e^{-r\,u(t,\lambda)}}{e^{-\beta t}}\\
\!\!&=&\!\!e^{-\beta\big(\Phi(u(t,\lambda))-t\big)}=e^{-\beta\Phi(\lambda)}=\mathbb{E}_{\mu_{\mbox{\tiny$\beta$}}}[e^{-\lambda \rZ_0}]
\end{eqnarray*}
We now assume $\Psi(+\infty) \in (-\infty,0)$ and we prove that $\lambda\mapsto e^{-\beta\Phi(\lambda)}$ is indeed the Laplace transform of a probability measure $\mu_{\mbox{\tiny$\beta$}}$ on $(0,\infty)$. Let $x:=\beta/\nu(0,\infty)$, for all $\lambda > 0$ we have
\begin{equation*}
\mathbb{E}_x\big[e^{-\lambda \mathrm{Z}_t}\, | \,\rT > t\big] = \frac{\mathbb{E}_x\big[e^{-\lambda \mathrm{Z}_t};\rT > t\big]}{\mathbb{P}_x(\rT > t)} = \exp\Big(-x\big(u(t,\lambda)-a_t\big)\Big)
\end{equation*}
From (\ref{EqUt}) and the definition of $\Phi$ we get that
\begin{equation*}
\int_{u(t,\lambda)}^{a_t}\frac{du}{\Psi(u)}=\Phi(\lambda)
\end{equation*}
Using again (\ref{EqUt}) and the fact that $\Psi$ is non-positive, we get that $a_t \rightarrow \infty$ and $u(t,\lambda) \rightarrow \infty$ as $t\rightarrow\infty$. Since $\Psi(u)\rightarrow -\nu(0,\infty)$ as $u\rightarrow\infty$, one deduces that
\begin{equation*}
\int_{u(t,\lambda)}^{a_t}\frac{du}{\Psi(u)}\underset{t\rightarrow\infty}{\sim}\frac{u(t,\lambda)-a_t}{\nu(0,\infty)}
\end{equation*}
and therefore
\begin{equation*}
\mathbb{E}_x\big[e^{-\lambda \mathrm{Z}_t}|\rT > t\big] \underset{t\rightarrow\infty}{\longrightarrow} e^{-\Phi(\lambda)\, x\, \nu(0,\infty)}
\end{equation*}
Since $\Phi(\lambda)\rightarrow 0$ as $\lambda\downarrow 0$, we deduce that $\lambda\mapsto e^{-\Phi(\lambda)\,x\,\nu(0,\infty)}=e^{-\beta\Phi(\lambda)}$ is the Laplace transform of a probability measure on $[0,\infty)$. Moreover, it does not charge $0$ since $\Phi(\lambda)\rightarrow\infty$ as $\lambda\rightarrow\infty$.\\
We now suppose $\Psi(+\infty)=-\infty$. An easy adaptation of the preceding arguments ensures that for any $x,\lambda > 0$
\begin{equation*}
\mathbb{E}_x[e^{-\lambda \mathrm{Z}_t}|\rT > t] \underset{t\rightarrow\infty}{\longrightarrow} 0
\end{equation*}
Hence the limiting distribution is trivial: it is a Dirac mass at infinity. However, let us prove that $\lambda\mapsto e^{-\beta\Phi(\lambda)}$ is indeed the Laplace transform of a probability measure $\mu_{\mbox{\tiny$\beta$}}$ on $(0,\infty)$. For every $\epsilon > 0$, define the branching mechanism
\begin{equation*}
\Psi_{\epsilon}(u):=\int_{(0,\infty)}\!\!\!(e^{-hu}-1)(\tun_{\{h>\epsilon\}}\nu(dh)+\frac{1}{\epsilon}\delta_{-D\epsilon}(dh))=\frac{1}{\epsilon}(e^{D\epsilon u}-1) + \int_{(\epsilon,\infty)}\!\!\!(e^{-hu}-1)\,\nu(dh)
\end{equation*}
Observe that for any $u\geq 0$, $\Psi_{\epsilon}(u)\downarrow\Psi(u)$ as $\epsilon \downarrow 0$. Thus by monotone convergence we deduce that
\begin{equation*}
\forall \lambda\geq 0,\;\;\int_{\lambda}^{0}\frac{du}{\Psi_{\epsilon}(u)} \underset{\epsilon\downarrow 0}{\longrightarrow}\int_{\lambda}^{0}\frac{du}{\Psi(u)}
\end{equation*}
The first part of the proof applies to $\Psi_{\epsilon}$, and therefore the l.h.s. of the preceding equation is the Laplace exponent taken at $\lambda$ of an infinitely divisible distribution on $(0,\infty)$. Since the r.h.s. vanishes at $0$ and goes to $\infty$ at $\infty$, it is the Laplace exponent of an infinitely divisible distribution on $(0,\infty)$.\cqfd

\subsection{Proof of Theorem \ref{ThQDistrib}}
Fix $t \geq 0$. Since we are dealing with non-decreasing processes and since the asserted limiting process is continuous, the convergence of the finite-dimensional marginals suffices to prove the theorem (see for instance Th VI.3.37 in~\cite{JacodShiryaev}). By Proposition \ref{PropDt}, we know that $u^Q(t,\lambda)=\lambda e^{-Dt}$ is the function related to $\Psi^Q$ via (\ref{EqDefUt}). Hence we only need to prove that for all $n\geq 1$, all $n$-uplets $0\leq t_1 \leq \ldots \leq t_n \leq t$ and all coefficients $\lambda_1,\ldots,\lambda_n > 0$ we have
\begin{equation}\label{EquationLimitQProcess}
\lim\limits_{s\rightarrow\infty}-\frac{1}{x}\log\mathbb{E}_x[e^{-\lambda_1 \rZ_{t_1}-\ldots-\lambda_n \rZ_{t_n}}|\rT >t+s]=\lambda_1 d_{t_1}+\ldots+\lambda_n d_{t_n}
\end{equation}
Thanks to an easy recursion, we get
\begin{eqnarray*}
&&-\frac{1}{x}\log\mathbb{E}_x[e^{-\lambda_1 \rZ_{t_1}-\ldots-\lambda_n \rZ_{t_n}}|\rT >t+s]\\
\!\!\!&=&\!\!\! u\bigg(\!t_1,\lambda_1+u\Big(t_2\!-\!t_1,\lambda_2+\ldots+u\big(t_n\!-\!t_{n-1},\lambda_n+u(t+s-t_n,0+)\big)\ldots\Big)\!\bigg)\!-u(t+s,0+)
\end{eqnarray*}
To prove (\ref{EquationLimitQProcess}), we proceed via a recurrence on $n$. We check the case $n=1$. Recall that $u(t,\lambda)/\lambda\rightarrow d_t$ as $\lambda\rightarrow\infty$. Then the concavity of $\lambda \rightarrow u(t,\lambda)$ (that can be directly checked from (\ref{EqLKUt})) implies that $\partial_\lambda u(t,\lambda) \rightarrow d_t$ as $\lambda \rightarrow\infty$. Writing $u(t+s,0+)=u\big(t_1,u(t+s-t_1,0+)\big)$, the preceding arguments and the fact that $u(t+s-t_1,0+)=a_{t+s-t_1}\rightarrow\infty$ as $s\rightarrow\infty$ entail
\begin{equation*}
u\big(t_1,\lambda_1+u(t+s-t_1,0+)\big)\!-u(t+s,0+)\rightarrow \lambda_1d_{t_1}\; \mbox{ as }s\rightarrow\infty
\end{equation*}
Suppose now that the result holds at rank $n\!-\!1 \geq 1$, that is, (\ref{EquationLimitQProcess}) holds true for all $(n\!-\!1)$-uplets of times and coefficients. In particular
\begin{eqnarray*}
&&u\Big(t_2-t_1,\lambda_2+\ldots+u\big(t_n-t_{n-1},\lambda_n+u(t+s-t_n,0+)\big)\!\!\ldots\!\!\Big)-u(t+s-t_1,0+)\\
&&\!\!\underset{s\rightarrow\infty}{\sim}\!\! \lambda_2d_{t_2-t_1}+\ldots+\lambda_nd_{t_n-t_1}
\end{eqnarray*}
Therefore the argument of the case $n\!=\!1$ applies and shows that
\begin{eqnarray*}
&&\!\!\!\!u\bigg(t_1,\lambda_1+u\Big(t_2-t_1,\lambda_2+\ldots+u\big(t_n-t_{n-1},\lambda_n+u(t+s-t_n,0+)\big)\ldots\Big)\!\bigg)\!-u(t+s,0+)\\
&&\underset{s\rightarrow\infty}{\sim}\!\!\!\lambda_1d_{t_1} + \lambda_2d_{t_1}d_{t_2-t_1}+\ldots+\lambda_nd_{t_1}d_{t_n-t_1}
\end{eqnarray*}
which is the desired result since $d_{r+r'}=d_rd_{r'}$ for all $r,r'\geq 0$ by Proposition \ref{PropDt}.\cqfd

\subsection{Proof of Theorem \ref{ThRegularly}}
Recall the notation $a_t=u(t,0+)$ and that $a_t\rightarrow\infty$ as $t\rightarrow\infty$. Since $u\mapsto \Psi(u)/u$ is strictly increasing from $-\infty$ to $D$, there exists a positive function $f$ such that
\[ \Psi\big(f(t)^{-1}\big)f(t)\sim \Psi(a_t) \]
as $t\rightarrow\infty$. Since $\Psi(a_t)\rightarrow-\infty$ as $t\rightarrow\infty$, necessarily $f(t)\rightarrow\infty$. Fix $\lambda,x \in (0,\infty)$. For any $t\in (0,\infty)$, we have
$$ -\frac{1}{x}\log\bbE_x[e^{-\lambda \rZ_t/f(t)}\, | \, t < T] = u(t,\lambda\, f(t)^{-1})-a_t$$
We rely on two lemmas, whose proofs are postponed to the end of the subsection.
\begin{lemma}\label{LemmaPhi}
As $u\downarrow 0$, we have $\Phi(u) \sim u/(-\alpha\Psi(u))$.
\end{lemma}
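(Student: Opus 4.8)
The plan is to reduce the statement to a Karamata-type asymptotic for the integral of a regularly varying function. Since $\Psi(v)=-v^{1-\alpha}L(v)$ with $\alpha\in(0,1)$, the definition $\Phi(u)=\int_u^0 dv/\Psi(v)$ rewrites as $\Phi(u)=\int_0^u v^{\alpha-1}L(v)^{-1}\,dv$, an integral which converges at $0$ precisely because $\alpha-1>-1$. On the other hand $u/(-\alpha\Psi(u))=u^{\alpha}/(\alpha L(u))$, so the assertion to be proved is exactly
\[ \int_0^u \frac{v^{\alpha-1}}{L(v)}\,dv \;\underset{u\downarrow 0}{\sim}\; \frac{u^{\alpha}}{\alpha\,L(u)}. \]

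First I would perform the change of variables $v=us$, which turns the left-hand side into $u^{\alpha}\int_0^1 s^{\alpha-1}L(us)^{-1}\,ds$, so that
\[ \frac{\Phi(u)}{u^{\alpha}/(\alpha L(u))} \;=\; \alpha\int_0^1 s^{\alpha-1}\,\frac{L(u)}{L(us)}\,ds . \]
For each fixed $s\in(0,1)$ the slow variation of $L$ at $0$ gives $L(u)/L(us)\to 1$ as $u\downarrow 0$, and $\alpha\int_0^1 s^{\alpha-1}\,ds=1$, so it only remains to justify passing to the limit under the integral sign.

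To dominate the integrand I would invoke the uniform convergence theorem (Potter bounds) for slowly varying functions: fixing $\delta\in(0,\alpha)$, there is $u_0>0$ such that $L(u)/L(us)\le 2\,s^{-\delta}$ for all $0<u<u_0$ and all $s\in(0,1]$; hence $s^{\alpha-1}L(u)/L(us)\le 2\,s^{\alpha-1-\delta}$, which is integrable on $(0,1)$ because $\alpha-1-\delta>-1$. Dominated convergence then yields $\alpha\int_0^1 s^{\alpha-1}L(u)/L(us)\,ds\to 1$, which is the claim. Alternatively one could simply quote Karamata's theorem on the integral of a regularly varying function (e.g. Bingham--Goldie--Teugels).

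The only genuine subtlety is this last interchange of limit and integral near the singular endpoint $s=0$; everything else is bookkeeping. Using Potter's bounds is exactly what controls the non-uniformity of $L(u)/L(us)\to 1$ as $s\downarrow 0$.
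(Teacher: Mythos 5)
Your proof is correct, and it takes a genuinely different route from the paper. The paper integrates by parts, writing
\[
\Phi(u) = -\frac{u}{\Psi(u)} + \int_u^0 \frac{1}{\Psi(v)}\,\frac{v\Psi'(v)}{\Psi(v)}\,dv,
\]
and then invokes Lamperti's theorem (Theorem 2 of \cite{Lamperti58}) to assert $v\Psi'(v)/\Psi(v)\to 1-\alpha$ as $v\downarrow 0$ — a monotone-density-type statement that requires $\Psi$ to be a convex Laplace exponent, not merely regularly varying — whence the integral is $\sim(1-\alpha)\Phi(u)$ and so $\alpha\Phi(u)\sim -u/\Psi(u)$. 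You bypass any differentiation of $\Psi$ and recast the claim as the Karamata asymptotic $\int_0^u v^{\alpha-1}L(v)^{-1}\,dv\sim u^{\alpha}/(\alpha L(u))$, proved by the substitution $v=us$ and dominated convergence, with Potter bounds $L(u)/L(us)\le C s^{-\delta}$ (for $\delta<\alpha$, $u$ small, $s\in(0,1]$) supplying the integrable dominant $C s^{\alpha-1-\delta}$. Both arguments are sound; yours is more elementary and self-contained (it needs only the regular variation of $\Psi$ at $0$, not the convexity that underlies Lamperti's derivative asymptotics), while the paper's is shorter on the page because it delegates the key step to a citation. You correctly identify the non-uniformity of $L(u)/L(us)\to 1$ near $s=0$ as the only real issue and handle it properly.
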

Since $f(t) \rightarrow +\infty$ as $t\rightarrow\infty$ the lemma implies
$$\Psi(a_t)\Phi(\lambda\, f(t)^{-1}) \underset{t\rightarrow\infty}{\sim} -\frac{\Psi(a_t)\lambda}{\alpha f(t) \Psi(\lambda\, f(t)^{-1})}$$
Since $L$ is slowly varying at $0+$, we deduce that $\Psi(\lambda\, f(t)^{-1}) \sim \lambda^{1-\alpha}\Psi( f(t)^{-1})$ as $t\rightarrow\infty$. Thus the very definition of $f$ entails
\begin{equation}\label{EqPsiPhi}
\Psi(a_t)\Phi(\lambda\, f(t)^{-1}) \underset{t\rightarrow\infty}{\sim} -\lambda^{\alpha}\alpha^{-1}
\end{equation}
\vspace{-15pt}
\begin{lemma}\label{LemmaSimInt}
The following holds true as $t\rightarrow\infty$
$$\int_{u(t,\lambda\, f(t)^{-1})}^{a_t}\frac{dv}{\Psi(v)} \sim \int_{u(t,\lambda\, f(t)^{-1})}^{a_t}\frac{dv}{\Psi(a_t)}$$
\end{lemma}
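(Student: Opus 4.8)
The plan is to approximate the integrand $1/\Psi(v)$ by the constant $1/\Psi(a_t)$ over the range of integration, the error being negligible because $\Psi$ is almost constant there. Write $b_t:=u(t,\lambda\,f(t)^{-1})$; since $\lambda>0$ and $u(t,\cdot)$ is non-decreasing we have $a_t\le b_t$, and both tend to $+\infty$ as $t\to\infty$, so it is equivalent to prove $\int_{a_t}^{b_t}dv/\Psi(v)\sim\int_{a_t}^{b_t}dv/\Psi(a_t)$. Using the identities (\ref{EqPhiU}) one first records that
\[\int_{a_t}^{b_t}\frac{dv}{-\Psi(v)}=\Phi(b_t)-\Phi(a_t)=\Phi(\lambda\,f(t)^{-1})=:\epsilon_t,\]
and that $\epsilon_t\to0$, since $f(t)\to\infty$ and $\Phi(0+)=0$.

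The heart of the matter is to show $\Psi(a_t)\sim\Psi(b_t)$ as $t\to\infty$. As $\Psi$ is convex, decreasing and satisfies $\Psi(0)=0$, the function $-\Psi$ is non-negative, non-decreasing, concave and vanishes at the origin, so $v\mapsto-\Psi(v)/v$ is non-increasing. Hence for $v\in[a_t,b_t]$ one has $-\Psi(v)\le-\Psi(b_t)$, which gives $\epsilon_t\ge(b_t-a_t)/(-\Psi(b_t))$, i.e. $b_t-a_t\le-\Psi(b_t)\,\epsilon_t$; and the same monotonicity gives $-\Psi(b_t)\le(-\Psi(a_t)/a_t)\,b_t$. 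Combining these two bounds and dividing by $b_t$,
\[1-\frac{a_t}{b_t}\ \le\ \frac{-\Psi(a_t)\,\epsilon_t}{a_t}\,.\]
By (\ref{EqPsiPhi}) the numerator $-\Psi(a_t)\,\epsilon_t=-\Psi(a_t)\,\Phi(\lambda\,f(t)^{-1})$ converges (to $\lambda^{\alpha}/\alpha$), hence stays bounded, while $a_t\to\infty$; therefore $a_t/b_t\to1$. Invoking once more the monotonicity of $v\mapsto-\Psi(v)/v$, $1\le-\Psi(b_t)/(-\Psi(a_t))\le b_t/a_t\to1$, so indeed $\Psi(a_t)\sim\Psi(b_t)$.

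To finish, a monotonicity sandwich suffices: on $[a_t,b_t]$ the map $\Psi$ is negative and non-increasing, so $1/\Psi(a_t)\le1/\Psi(v)\le1/\Psi(b_t)<0$, and integration over $[a_t,b_t]$ yields
\[\frac{b_t-a_t}{\Psi(a_t)}\ \le\ \int_{a_t}^{b_t}\frac{dv}{\Psi(v)}\ \le\ \frac{b_t-a_t}{\Psi(b_t)},\]
all three quantities being negative. Dividing through by $(b_t-a_t)/\Psi(a_t)$ shows that the ratio of $\int_{a_t}^{b_t}dv/\Psi(v)$ to $\int_{a_t}^{b_t}dv/\Psi(a_t)=(b_t-a_t)/\Psi(a_t)$ lies between $\Psi(a_t)/\Psi(b_t)$ and $1$, and hence tends to $1$. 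This is the asserted equivalence. The only genuine difficulty is the comparison $\Psi(a_t)\sim\Psi(b_t)$ of the middle paragraph; it rests on the smallness $\epsilon_t\to0$ together with the at most linear growth of $-\Psi$ furnished by concavity, the rest being routine.
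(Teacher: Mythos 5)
Your proof is correct and takes a genuinely different, and in fact cleaner, route than the paper's. The paper's argument goes in two stages: first it bounds the error integrand via the convexity estimate $0\le\Psi(a_t)-\Psi(v)\le-\Psi'(a_t)(v-a_t)$ together with the fact that $\Psi'(v)/\Psi(v)\to 0$ as $v\to\infty$, concluding the lemma \emph{conditional} on $t\mapsto u(t,\lambda f(t)^{-1})-a_t$ staying bounded; it then proves that boundedness separately, by lower-bounding $\Psi$ on $[a_t,b_t]$ with a supporting-type affine function (using $\Psi'(v)\uparrow D$), integrating to obtain a logarithmic inequality, and invoking $\Phi(\lambda f(t)^{-1})\to 0$ plus~(\ref{EqPsiPhi}). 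You avoid the derivative of $\Psi$ entirely: you observe that $\Phi(b_t)-\Phi(a_t)=\Phi(\lambda f(t)^{-1})=:\epsilon_t\to 0$, and then exploit only the monotonicity of $u\mapsto\Psi(u)/u$ (a consequence of convexity with $\Psi(0)=0$, already recorded in the paper's introduction) to get $1-a_t/b_t\le -\Psi(a_t)\epsilon_t/a_t\to 0$, whence $a_t\sim b_t$ and then $\Psi(a_t)\sim\Psi(b_t)$ directly; the monotonicity sandwich finishes. What your approach buys is that it dispenses with the separate boundedness-of-$b_t-a_t$ step and with the hypothesis $\Psi'(v)/\Psi(v)\to 0$: you use only the concavity of $-\Psi$ and the single asymptotic~(\ref{EqPsiPhi}), which the paper has established in any case before invoking the lemma. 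Both arguments are sound; yours is the more economical.
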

From the latter lemma, we deduce
\begin{eqnarray*}
u(t,\lambda \, f(t)^{-1})-a_t &\underset{t\rightarrow\infty}{\sim}& -\Psi(a_t)\int_{u(t,\lambda\, f(t)^{-1})}^{a_t}\frac{dv}{\Psi(v)}= -\Psi(a_t)\Phi(\lambda\, f(t)^{-1})\\
&\underset{t\rightarrow\infty}{\sim}& \lambda^{\alpha}\alpha^{-1}
\end{eqnarray*}
where we use (\ref{EqPsiPhi}) at the second line. The theorem is proved.\cqfd\\
\textit{Proof of Lemma \ref{LemmaPhi}.} Recall the definition of $\Phi$. An integration by parts yields that for all $u\in[0,\infty)$
$$ \Phi(u) = -\frac{u}{\Psi(u)} + \int_u^0\frac{1}{\Psi(v)}\frac{v\Psi'(v)}{\Psi(v)}dv $$
Recall from Theorem 2 in~\cite{Lamperti58} that $v\Psi'(v)/\Psi(v) \rightarrow 1-\alpha$ as $v\downarrow 0$. Therefore an elementary calculation ends the proof.\cqfd\\
\textit{Proof of Lemma \ref{LemmaSimInt}.} For all $t\in[0,\infty)$, $a_t \leq u(t,\, \lambda\, f(t)^{-1})$. We write
$$ \int_{a_t}^{u(t,\, \lambda\, f(t)^{-1})}\frac{dv}{\Psi(v)}-\int_{a_t}^{u(t,\, \lambda\, f(t)^{-1})}\frac{dv}{\Psi(a_t)} = \int_{a_t}^{u(t,\, \lambda\, f(t)^{-1})}\frac{\Psi(a_t)-\Psi(v)}{\Psi(v)\Psi(a_t)}dv $$
The convexity of $\Psi$ implies that for all $v\in[a_t,u(t,\, \lambda\, f(t)^{-1})]$ we have
\begin{equation}\label{EqConvexity}
0 \leq \Psi(a_t)-\Psi(v) \leq -\Psi'(a_t)\big(v-a_t\big)
\end{equation}
Suppose that $t\mapsto u(t,\, \lambda\, f(t)^{-1})-a_t$ is bounded for large times. The fact that $\Psi'(v)/\Psi(v)$ goes to $0$ as $v\rightarrow\infty$ together with (\ref{EqConvexity}) then entail
\begin{eqnarray*}
0\leq\int_{a_t}^{u(t,\frac{\lambda}{f(t)})}\!\frac{\Psi(a_t)-\Psi(v)}{\Psi(v)\Psi(a_t)}dv&\leq&-\Big(u\Big(t,\frac{\lambda}{f(t)}\Big)-a_t\Big)\frac{\Psi'(a_t)}{\Psi(a_t)}\int_{a_t}^{u(t,\frac{\lambda}{f(t)})}\!\!\frac{dv}{\Psi(v)}\\
&\underset{t\rightarrow\infty}{\leq}& \!o\Big(\int_{a_t}^{u(t,\frac{\lambda}{f(t)})}\!\!\frac{dv}{\Psi(v)}\Big)
\end{eqnarray*}
which in turn proves the lemma. We are left with the proof of the boundedness of $t\mapsto u(t,\, \lambda\, f(t)^{-1})-a_t$ for large times. Fix $k \in (-D,\infty)$. Since $\Psi'(v)\uparrow D$ as $v\rightarrow\infty$, for $t$ large enough we get from (\ref{EqConvexity}) that $\Psi(v) \geq \Psi(a_t) - k(v-a_t)$ for all $v\in[\, a_t,u(t,\, \lambda\, f(t)^{-1})]$. A simple calculation then yields
$$ 0 \leq \frac{1}{k}\log\Big(1-k\frac{u(t,\, \lambda\, f(t)^{-1})-a_t}{\Psi(a_t)}\Big) \leq \int_{u(t,\lambda\, f(t)^{-1})}^{a_t}\frac{dv}{\Psi(v)} = \Phi(\lambda f(t)^{-1})$$
Using $\log(1+v) \geq v/2$ for $v$ small and since $\Phi\big(\lambda f(t)^{-1}\big) \rightarrow 0$, we get for $t$ large enough
$$ 0 \leq -\frac{u(t,\, \lambda\, f(t)^{-1})-a_t}{2\,\Psi(a_t)} \leq \Phi(\lambda f(t)^{-1}) $$
From (\ref{EqPsiPhi}), we deduce that $t\mapsto u(t,\, \lambda\, f(t)^{-1})-a_t$ is bounded for large times.\cqfd

\section{Proof of Proposition \ref{PropCritical}}
The proof is inspired by that of Theorem 1 in~\cite{Slack68} but for completeness we give all the details. Recall that $\Psi(u)=u^{1+\alpha}L(u)$ with $L$ slowly varying at $0$ and $\alpha \in (0,1)$ and that $\rT_0<\infty$ almost surely: consequently $q=0$ and (\ref{EqExtin}) holds true. Recall (\ref{EqUt}). We set for all $t\geq 0$, $v(t):=u(t,+\infty)$ which is finite by (\ref{EqExtin}). Observe that $v$ is decreasing from $+\infty$ to $0$. Grey p. 672~\cite{Grey74} proved that
\begin{equation}\label{EqProbaExtin}
\forall t\geq 0, x > 0,\;\; \bbP_x\big(t\geq \rT\big) = e^{-xv(t)}
\end{equation}
Since $\Psi(u)/u \rightarrow \infty$ as $u\rightarrow\infty$ we get for all $r > 0$
\begin{eqnarray*}
\frac{v(r)}{r\Psi(v(r))} &=& \frac{1}{r}\int_0^r\partial_s\Big( \frac{v(s)}{\Psi(v(s))}\Big)ds = \frac{1}{r}\int_0^r\partial_s v(s)\frac{\Psi\big(v(s)\big)-v(s)\Psi'\big(v(s)\big)}{\Psi\big(v(s)\big)^2}ds\\
&=& \frac{1}{r}\int_0^r\Big(\frac{v(s)\Psi'\big(v(s)\big)}{\Psi\big(v(s)\big)}-1\Big)ds
\end{eqnarray*}
where we use the identity $\partial_s v(s)=-\Psi\big(v(s)\big)$ at the second line. Since $\Psi$ is regularly varying at $0$, Theorem 2 in~\cite{Lamperti58} entails that $u\Psi'(u)/\Psi(u) \rightarrow 1+\alpha$ as $u\downarrow 0$. Taking the limit $r\rightarrow\infty$ in the above identity, one gets
\begin{equation}\label{EquationEquivalentCritical}
v(r)^\alpha L(v(r)) \sim \frac{1}{\alpha\, r}\; \mbox{ as }r\rightarrow\infty
\end{equation}
Since $v$ is a bijection from $(0,\infty)$ onto itself, for any $t\in(0,\infty)$ there exists a unique $s(t)=s \in(0,\infty)$ such that $v(s)=\lambda f(t)$. From the assumption $f(t) \sim v(t)$ as $t\rightarrow\infty$, we deduce that $s\rightarrow\infty$ as $t\rightarrow\infty$. We use (\ref{EquationEquivalentCritical}) and the slowness of the variation of $L$ to get as $t\rightarrow\infty$
$$ \frac{t}{s} \sim \frac{v(s)^\alpha L(v(s))}{v(t)^\alpha L(v(t))} \sim \frac{\lambda^\alpha f(t)^\alpha L(\lambda f(t))}{f(t)^\alpha L(f(t))} \sim \lambda^\alpha $$
Hence $\lambda^\alpha s \sim t$ as $t\rightarrow\infty$.
Using $\partial_r v(r)=-\Psi\big(v(r)\big)$ and (\ref{EquationEquivalentCritical}), we obtain for all $t > 0$
\begin{equation*}
\log\Big(\frac{v(t+s)}{v(t)}\Big) = \int_t^{t+s} \frac{\partial_r v(r)}{v(r)}dr =-\int_t^{t+s} v(r)^\alpha L\big(v(r)\big)dr \underset{t\rightarrow\infty}{\sim} -\frac{1}{\alpha}\log(1+\lambda^{-\alpha})
\end{equation*}
Using the above results, (\ref{EqProbaExtin}) and the identity $u(t,\lambda f(t))\!=\!u(t,v(s))\!=\!v(t+s)$ we get for all $t > 0$
\begin{eqnarray*}
\bbE_x\Big[e^{-\lambda Z_t f(t)}\, \big| \, t < \rT\Big] &=&\frac{\bbE_x\Big[e^{-\lambda Z_t f(t)}\Big] - \bbP_x(t \geq \rT)}{\bbP_x(t < \rT)} = \frac{e^{-x\, u(t,\lambda f(t))}-e^{-x\, v(t)}}{1-e^{-x\, v(t)}}\\
&\underset{t\rightarrow\infty}{\sim}& 1 - \frac{v(t+s)}{v(t)}\underset{t\rightarrow\infty}{\sim} 1 - (1+\lambda^{-\alpha})^{-1/\alpha}
\end{eqnarray*}
This ends the proof.\cqfd



\begin{thebibliography}{99}

\bibitem{BertoinBookLevy96}
Jean Bertoin, \emph{L\'evy processes}, Cambridge Tracts in Mathematics, vol. 121,
  Cambridge University Press, Cambridge, 1996. \MR{1406564 (98e:60117)}

\bibitem{Bingham76}
Nick~H. Bingham, \emph{Continuous branching processes and spectral positivity},
  Stochastic Processes Appl. \textbf{4} (1976), no.~3, 217--242. \MR{0410961
  (53 \#14701)}

\bibitem{Grey74}
David~R. Grey, \emph{Asymptotic behaviour of continuous time, continuous
  state-space branching processes}, J. Appl. Probability \textbf{11} (1974),
  669--677. \MR{0408016 (53 \#11783)}

\bibitem{Harris63}
Theodore~E. Harris, \emph{The theory of branching processes}, Dover Phoenix
  Editions, Dover Publications Inc., Mineola, NY, 2002, Corrected reprint of
  the 1963 original [Springer, Berlin; MR0163361 (29 \#664)]. \MR{1991122}

\bibitem{JacodShiryaev}
Jean Jacod and Albert~N. Shiryaev, \emph{Limit theorems for stochastic
  processes}, second ed., Grundlehren der Mathematischen Wissenschaften
  [Fundamental Principles of Mathematical Sciences], vol. 288, Springer-Verlag,
  Berlin, 2003. \MR{1943877 (2003j:60001)}

\bibitem{Jirina58}
Miloslav Ji{\v{r}}ina, \emph{Stochastic branching processes with continuous
  state space}, Czechoslovak Math. J. \textbf{8 (83)} (1958), 292--313.
  \MR{0101554 (21 \#364)}

\bibitem{Kyprianou06}
Andreas~E. Kyprianou, \emph{Introductory lectures on fluctuations of {L}\'evy
  processes with applications}, Universitext, Springer-Verlag, Berlin, 2006.
  \MR{2250061 (2008a:60003)}

\bibitem{Lambert07}
Amaury Lambert, \emph{Quasi-stationary distributions and the continuous-state
  branching process conditioned to be never extinct}, Electron. J. Probab.
  \textbf{12} (2007), no.~14, 420--446. \MR{2299923 (2008b:60183)}

\bibitem{Lamperti58}
John Lamperti, \emph{An occupation time theorem for a class of stochastic
  processes}, Trans. Amer. Math. Soc. \textbf{88} (1958), 380--387. \MR{0094863
  (20 \#1372)}

\bibitem{Lamperti67}
John Lamperti, \emph{Continuous state branching processes}, Bull. Amer. Math. Soc.
  \textbf{73} (1967), 382--386. \MR{0208685 (34 \#8494)}

\bibitem{LeGall99}
Jean-Fran{\c{c}}ois Le~Gall, \emph{Spatial branching processes, random snakes
  and partial differential equations}, Lectures in Mathematics ETH Z\"urich,
  Birkh\"auser Verlag, Basel, 1999. \MR{1714707 (2001g:60211)}

\bibitem{Li00}
Zeng-Hu Li, \emph{Asymptotic behaviour of continuous time and state branching
  processes}, J. Austral. Math. Soc. Ser. A \textbf{68} (2000), no.~1, 68--84.
  \MR{1727226 (2001e:60179)}

\bibitem{Silverstein68}
Martin~L. Silverstein, \emph{A new approach to local times}, J. Math. Mech.
  \textbf{17} (1967/1968), 1023--1054. \MR{0226734 (37 \#2321)}

\bibitem{Slack68}
R.~S. Slack, \emph{A branching process with mean one and possibly infinite
  variance}, Z. Wahrscheinlichkeitstheorie und Verw. Gebiete \textbf{9} (1968),
  139--145. \MR{0228077 (37 \#3661)}

\bibitem{Yaglom47}
Akiva~M. Yaglom, \emph{Certain limit theorems of the theory of branching random
  processes}, Doklady Akad. Nauk SSSR (N.S.) \textbf{56} (1947), 795--798.
  \MR{0022045 (9,149e)}

\end{thebibliography}



\ACKNO{I would like to thank Amaury Lambert for a fruitful discussion, Thomas Duquesne for pointing out to me the paper of Slack, Olivier H\'enard for useful remarks and two referees for several comments that improved the presentation of this article. Part of this work was carried out while I was visiting Alison Etheridge at the Department of Statistics in Oxford: I am grateful to the Fondation sciences math\'ematiques de Paris for their funding.}


\end{document}